\setlist{  
  listparindent=\parindent,
  parsep=0pt,
}
\theoremstyle{plain}
\newtheorem{thm}{Theorem}[section]
\newtheorem{prop}[thm]{Proposition}
\newtheorem{lemma}[thm]{Lemma}
\newtheorem{cor}[thm]{Corollary}
\theoremstyle{definition}
\newtheorem{mydef}[thm]{Definition}
\newtheorem{remark}[thm]{Remark}
\numberwithin{equation}{section} 
\DeclarePairedDelimiter\ipp{\langle}{\rangle}
\DeclarePairedDelimiter{\paren}{\lparen}{\rparen}
\DeclarePairedDelimiter{\jp}{\langle}{\rangle}
\DeclareMathOperator{\supp}{supp}
\newcommand{\M}{{\mathcal{M}}}
\newcommand{\p}{{\partial}}
\renewcommand{\d}{\delta}
\newcommand{\R}{{\mathbb{R}}}
\newcommand{\N}{{\mathbb{N}}}
\newcommand{\Z}{{\mathbb{Z}}}
\renewcommand{\H}{{\mathcal{H}}}
\renewcommand{\P}{{\mathcal{P}}}
\newcommand{\T}{{\mathbb{T}}}
\newcommand{\E}{\mathbf{E}}
\newcommand{\g}{{\mathfrak{g}}}
\newcommand{\f}{\mathfrak{f}}
\newcommand{\Fr}{{\mathfrak{F}}}
\newcommand{\Hr}{\mathfrak{H}}
\newcommand{\Sc}{{\mathcal{S}}} 
\newcommand{\Dc}{\mathcal{D}} 
\renewcommand{\M}{{\mathcal{M}}}
\newcommand{\U}{\mathfrak{U}}
\newcommand{\tl}{\tilde}
\newcommand{\D}{\Delta}
\newcommand{\ph}{\phantom{=}}
\newcommand{\nn}{\nonumber}
\newcommand{\ol}{\overline}
\newcommand{\ux}{\underline{x}}
\newcommand{\uz}{\underline{z}}
\newcommand{\ur}{\underline{r}}
\newcommand{\ep}{\epsilon}
\newcommand{\vep}{\varepsilon}
\newcommand{\al}{\alpha}
\newcommand{\be}{\beta}
\newcommand{\Uu}{\mathfrak{U}}
\newcommand{\na}{\nabla}
\newcommand{\Te}{\mathrm{Term}}
\newcommand{\wh}{\widehat}
\newcommand{\ueta}{\underline{\eta}}
\renewcommand{\ueta}{\underline{\eta}}
\newcommand{\Dm}{|\nabla|}
\let\oldtocsection=\tocsection
\let\oldtocsubsection=\tocsubsection
\let\oldtocsubsubsection=\tocsubsubsection
\renewcommand{\tocsection}[2]{\hspace{0em}\oldtocsection{#1}{#2}}
\renewcommand{\tocsubsection}[2]{\hspace{1em}\oldtocsubsection{#1}{#2}}
\renewcommand{\tocsubsubsection}[2]{\hspace{2em}\oldtocsubsubsection{#1}{#2}}
\begin{document}

\title[Rigorous Derivation of Incompressible Euler Equation from Newton's Second Law]{On the Rigorous Derivation of the Incompressible Euler Equation from Newton's Second Law}

\author[M. Rosenzweig]{Matthew Rosenzweig}
\address{  
Department of Mathematics\\ 
Massachusetts Institute of Technology\\
Headquarters Office\\
Simons Building (Building 2), Room 106\\
77 Massachusetts Ave\\
Cambridge, MA 02139-4307}
\email{mrosenzw@mit.edu}

\begin{abstract}
A longstanding problem in mathematical physics is the rigorous derivation of the incompressible Euler equation from Newtonian mechanics. Recently, Han-Kwan and Iacobelli \cite{HkI2020} showed that in the monokinetic regime, one can directly obtain the Euler equation from a system of $N$ particles interacting in $\T^d$, $d\geq 2$, via Newton's second law through a \emph{supercritical mean-field limit}. Namely, the coupling constant $\lambda$ in front of the pair potential, which is Coulombic, scales like $N^{-\theta}$ for some $\theta \in (0,1)$, in contrast to the usual mean-field scaling $\lambda\sim N^{-1}$. Assuming $\theta\in (1-\frac{2}{d(d+1)},1)$, they showed that the empirical measure of the system is effectively described by the solution to the Euler equation as $N\rightarrow\infty$. Han-Kwan and Iacobelli asked if their range for $\theta$ was optimal. We answer this question in the negative by showing the validity of the incompressible Euler equation in the limit $N\rightarrow\infty$ for $\theta \in (1-\frac{2}{d},1)$. For reasons of scaling, this range appears optimal in all dimensions. Our proof is based on Serfaty's modulated-energy method, but compared to that of Han-Kwan and Iacobelli, crucially uses an improved ``renormalized commutator'' estimate to obtain the larger range for $\theta$.
\end{abstract}

\maketitle

\section{Introduction}
\label{sec:intro}
\subsection{Background}
\label{ssec:introback}
A source of much research in mathematical physics is the problem of rigorously deriving the \emph{incompressible Euler equation} in dimensions $d\geq 2$
\begin{equation}
\label{eq:Eul}
\begin{cases}
\p_t u + (u\cdot\nabla)u = -\nabla p\\
\nabla\cdot{u} =0 \\
u|_{t=0} = u_0,
\end{cases}
\qquad (t,x)\in \R\times\T^d,
\end{equation}
which describes the evolution of the velocity field $u$ of an ideal fluid ($p$ is the scalar pressure), from \emph{Newton's laws of mechanics} for the motion of $N$ indistinguishable particles with binary interactions
\begin{equation}
\label{eq:New}
\begin{cases}
\dot{x}_i = v_i \\
\dot{v}_i = -\lambda\sum_{{1\leq j\leq N}\atop{j\neq i}} \nabla\g(x_i-x_j).
\end{cases}
\end{equation}
Here, $(x_i,v_i)$ denote the position and momentum, respectively, of the $i$-th particle, in the phase space $\T^d\times\R^d$; $\g$ is the interaction pair potential; and $\lambda$ is a scaling parameter which encodes physical information about the system and about which we shall say more momentarily. We assume $\g$ to be the Coulomb potential (i.e. the Green's function of $-\D$ normalized to have zero mean). Formally, one can derive Euler's equation by considering the fluid as a continuum and applying Newton's second law to infinitesimal fluid volume elements. See, for instance, \cite{Tao2018}. However, turning such heuristic considerations into a mathematically rigorous proof is challenging.

Viewing Euler's equation as a macroscopic description and Newton's law as a microscopic description, one strategy is to go from Newton to Euler by first passing to a mesoscopic description, namely Boltzmann's equation for the evolution of the distribution function in particle phase space. By considering a suitable hydrodynamic scaling regime, one can then derive solutions to the incompressible Euler equation from the Boltzmann equation. Much research has been done on this topic, and we refer the interested reader to Saint-Raymond's monograph \cite{SR2009} for more details.

Recently, Han-Kwan and Iacobelli \cite{HkI2020} rigorously derived Euler's equation \eqref{eq:Eul} through a \emph{supercritical mean-field limit} of the Newtonian $N$-body problem \eqref{eq:New} in the \emph{monokinetic} regime. More precisely, they start from the many-body problem \eqref{eq:New} with coupling constant $\lambda = N^{-\theta}$, for $\theta>0$. The term monokinetic refers to the assumption that the velocities $v_i \approx u(x_i)$, for the same vector field $u$. Note that since we are in the repulsive setting, the particles remain separated and have bounded velocities, assuming the particles are initially separated. Consequently, the dynamics of \eqref{eq:New} are globally well-posed. Under the restriction
\begin{equation}
\label{eq:th_res}
1-\frac{2}{d(d+1)} < \theta < 1
\end{equation}
and assuming that the empirical measure of the system \eqref{eq:New},
\begin{equation}
\label{eq:EM}
\frac{1}{N}\sum_{i=1}^N \d_{(x_i(t),v_i(t))}(x,v)
\end{equation}
converges at time $t=0$ in a suitable topology to the measure 
\begin{equation}
\label{eq:limEM}
\d_0(u(t,x)-v)dxdv,
\end{equation}
where $u$ is a classical solution to equation \eqref{eq:Eul}, Han-Kwan and Iacobelli show using the modulated-energy method of \cite{Serfaty2020} that the empirical measure converges in the weak-* topology to the measure \eqref{eq:limEM} on the lifespan of $u$.

The modifier ``supercritical,'' coined by Han-Kwan and Iacobelli, stems from the following trichotomy.
\begin{itemize}[leftmargin=*]
\item
In the \emph{subcritical mean-field} regime $\theta>1$, the force term formally vanishes as $N\rightarrow\infty$, assuming that each term in the sum is $O(1)$, and therefore interactions become negligible for a very large number of particles.
\item
In the \emph{mean-field} regime $\theta=1$, the force is $O(1)$ as $N\rightarrow\infty$, and one expects the empirical measure of the system \eqref{eq:New} to converge to a solution of the \emph{pressureless Euler-Poisson equation} as $N\rightarrow\infty$. Serfaty, in collaboration with Duerinckx, recently proved this convergence via the aforementioned modulated-energy method in the breakthrough paper \cite[Appendix]{Serfaty2020}.\footnote{This result of Serfaty and Duerinckx is a special case of a result covering the full range of Riesz potential interactions. The limiting equation for super-Coulombic Riesz interactions is the so-called \emph{pressureless Euler-Riesz system}, the well-posedness of which has been studied by Choi and Jeong \cite{CJ2020}.} We also mention that the mean-field limit outside of the monokinetic regime is of great interest, as the limiting equation is \emph{Vlasov-Poisson}, a complete derivation of which remains elusive, despite much activity in recent years (e.g. see \cite{Spohn1991, HJ2007, Hauray2014, HJ2015, Golse2016ln, BP2016, Lazarovici2016, LP2017} and references therein).  
\item
In the \emph{supercritical mean-field} regime $0<\theta<1$, the force term may, in principle, diverge as $N\rightarrow\infty$, leaving open the possibility of more singular behavior than in the mean-field regime.
\end{itemize}

As they observe, one can equivalently interpret the result of Han-Kwan and Iacobelli \cite{HkI2020} as convergence in the combined mean-field and \emph{quasineutral} limits. More precisely, one can introduce a parameter $\vep>0$ by setting $\lambda=\vep^2 N = N^{\theta}$, so that the system \eqref{eq:New} becomes
\begin{equation}
\label{eq:NewQN}
\begin{cases}
\dot{x}_i = v_i \\
\dot{v}_i = -\frac{1}{\vep^2 N}\sum_{{1\leq j\leq N}\atop {j\neq i}} \nabla\g(x_i-x_j),
\end{cases}
\end{equation}
and the parameter $\vep$ now has the meaning of the \emph{Debye length}, which is the typical length scale of the interactions. The limit $\vep\rightarrow 0$ is called the quasineutral limit. The mean-field limit $N\rightarrow \infty$ of the system \eqref{eq:NewQN} formally leads to the Vlasov-Poisson equation, as discussed above. Several authors \cite{BG1994, Grenier1995, Grenier1996, Brenier2000, Masmoudi2001, HkH2015, HkR2016, HkI2017jde, HkI2017cms, GpI2020, GpI2020dev} have considered the quasineutral limit of Vlasov-Poisson with varying assumptions on the initial datum. But in the monokinetic regime of present interest, Brenier \cite{Brenier2000} has rigorously shown that the quasineutral limit leads to the incompressible Euler equation \eqref{eq:Eul}. We also mention that the combined mean-field and quasineutral regime has been studied in \cite{GpI2018, GpI2020} without the monokinetic assumption, where the expected limiting equation is the so-called \emph{kinetic Euler equation}. These results, though, are only slightly supercritical, in the sense that the coupling constant $\lambda\sim (\ln\ln N)/N$ as $N\rightarrow\infty$. 

\subsection{Statement of main results}
\label{ssec:introMR}
A question left open by Han-Kwan and Iacobelli in \cite{HkI2020} is whether the restriction \eqref{eq:th_res} is optimal. In the present article, we answer this question in the negative in all dimensions $d\geq 2$ by improving the range of $\theta$ to
\begin{equation}
\label{eq:thrng}
1-\frac{2}{d} < \theta < 1.
\end{equation}
As explained below, scaling considerations suggest that this range is optimal. To state our main results, we introduce the \emph{modulated energy} from \cite{HkI2020} (cf. the expression in \cite[pg. 40]{Serfaty2020})
\begin{equation}
\label{eq:MEdef}
\Hr_{N,\vep}(\uz_N,u) \coloneqq \frac{1}{2N}\sum_{i=1}^N|u(x_i) - v_i|^2 + \frac{1}{2\vep^2}\Fr_N(\ux_N,1+\vep^2 \Uu),
\end{equation}
where
\begin{equation}
\label{eq:MEpdef}
\Fr_N(\ux_N,1+\vep^2\Uu) \coloneqq \int_{(\T^d)^2\setminus\D_2} \g(x-y)d(\frac{1}{N}\sum_{i=1}^N\d_{x_i} - 1-\vep^2\Uu)^{\otimes 2}(x,y).
\end{equation}
Above, $u$ is a solution to equation \eqref{eq:Eul}, $\U\coloneqq \p_{\al}u^{\be} \p_{\be}u^\al=-\D p$, and $\uz_N \coloneqq (z_1,\ldots,z_N)$, $z_j = (x_j,v_j)$, is a solution to equation \eqref{eq:NewQN}. Here, $\D_2$ denotes the diagonal of $\T^d$. The quantity $\Hr_{N,\vep}(\uz_N,u)$ is a variant of the modulated energy introduced by Duerinckx and Serfaty \cite[Appendix]{Serfaty2020} for the monokinetic mean-field limit of the system \eqref{eq:NewQN} (i.e. $\vep$ fixed) and has proven to be a good quantity for measuring the distance between the $N$-body dynamics and the limiting dynamics. Our first result, the meat of the article, is a Gronwall-type estimate for the modulated energy, assuming the velocity field $u$ belongs to the H\"older space $C^{1,s}(\T^d)$, for some $0<s<1$.

\begin{thm}
\label{thm:main}
For $d\geq 2$ and $0<s<1$, there exists a constant $C_{d,s}>0$ such that the following holds. Let $u\in L^\infty([0,T]; C^{1,s}(\T^d))$ be a solution to equation \eqref{eq:Eul}, and let $\uz_N$ be a solution to equation \eqref{eq:NewQN} with pairwise distinct initial positions. Then for all $0\leq t\leq T$, we have that
\begin{equation}
\label{eq:main}
\begin{split}
|\Hr_{N,\vep}(\uz_N(t),u(t))| &\leq \Bigg(|\Hr_{N,\vep}(\uz_N(0),u(0))| + C_{d,s}\vep^2 \int_0^t \|u(\tau)\|_{C^{1,s}}^6 d\tau\\
&\ph + \frac{C_{d,s}(1+(\ln N)1_{d=2})}{N^{\frac{2}{d}}\vep^2}\int_0^t \Big(1+\|\nabla u(\tau)\|_{L^\infty} + \vep^2\|\nabla u(\tau)\|_{L^\infty}^2\Big)d\tau\Bigg) \\
&\ph  \times \exp\paren*{C_{d,s}\int_0^t (1+\|\nabla u(\tau)\|_{L^\infty})d\tau}.
\end{split}
\end{equation}
\end{thm}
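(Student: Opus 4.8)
The plan is to prove a differential inequality for $t\mapsto|\Hr_{N,\vep}(\uz_N(t),u(t))|$ and close it by Gronwall's lemma. Write $\mu_N\coloneqq\frac1N\sum_{i=1}^N\d_{x_i}$ for the empirical measure of positions, $\rho\coloneqq1+\vep^2\U$ for the reference density (a probability density, since $\U=-\D p$ has zero mean), and $h_N\coloneqq\g\ast(\mu_N-\rho)$ for the associated potential, so that $\frac1{2\vep^2}\Fr_N(\ux_N,\rho)$ is exactly the renormalization — removal of the infinite diagonal self-energy — of $\frac1{2\vep^2}\int_{\T^d}|\nabla h_N|^2$. First I would differentiate $\Hr_{N,\vep}$ in time along the coupled flow, invoking $\dot x_i=v_i$ and $\dot v_i=-\frac1{\vep^2N}\sum_{j\neq i}\nabla\g(x_i-x_j)$ for the particles, the Euler equation $\p_t u=-(u\cdot\nabla)u-\nabla p$ for the velocity field, the transport identity $\p_t\mu_N=-\nabla\cdot\bigl(\frac1N\sum_i v_i\d_{x_i}\bigr)$, and the almost-transport identity $\p_t\rho+\nabla\cdot(\rho u)=\vep^2 R$ with $R\coloneqq\p_t\U+u\cdot\nabla\U$ (using $\nabla\cdot u=0$); by the Euler equation and elliptic regularity $p\in C^{2,s}$ one checks that $\g\ast R\in C^{1,s}$ with $\|\g\ast R\|_{C^1}\lesssim\|u\|_{C^{1,s}}^3$.

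The computation is organized around the exact algebraic cancellation for which the modulated energy \eqref{eq:MEdef} was designed. Differentiating the kinetic part yields the interaction term $\frac1{\vep^2N^2}\sum_{i\neq j}(u(x_i)-v_i)\cdot\nabla\g(x_i-x_j)$, the advective term $\frac1N\sum_i(u(x_i)-v_i)\cdot\bigl((v_i-u(x_i))\cdot\nabla\bigr)u(x_i)$, and a pressure term $-\frac1N\sum_i(u(x_i)-v_i)\cdot\nabla p(x_i)$; differentiating $\frac1{2\vep^2}\Fr_N$ yields a matching interaction term $\frac1{\vep^2N^2}\sum_{i\neq j}v_i\cdot\nabla\g(x_i-x_j)$ (from transporting the positions), the pressure term $-\frac1N\sum_i v_i\cdot\nabla p(x_i)$ (from $\nabla(\g\ast\rho)=\vep^2\nabla p$), and $-\frac1{\vep^2}\int h_N\,\p_t\rho$. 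Summing, the interaction terms combine; antisymmetrizing $\nabla\g$ and reincorporating the cross-terms with $\rho$ recasts them as the \emph{renormalized commutator}
\begin{equation*}
\mathcal{C}_N\coloneqq\frac1{2\vep^2}\int_{(\T^d)^2\setminus\D_2}\bigl(u(x)-u(y)\bigr)\cdot\nabla\g(x-y)\,d(\mu_N-\rho)^{\otimes2}(x,y);
\end{equation*}
the pressure terms cancel, and $-\frac1{\vep^2}\int h_N\,\p_t\rho$ contributes — after an integration by parts in which the leading piece $\frac1{\vep^2}\int\nabla h_N\cdot u$ vanishes by $\nabla\cdot u=0$ — only lower-order errors, schematically $-\int(\g\ast R)\,d(\mu_N-\rho)$, $-\int_{\T^d}\bigl(\g\ast(u\cdot\nabla\U)\bigr)\rho$, and $-\vep^2\int_{\T^d}\nabla u:\nabla p\otimes\nabla p$. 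Hence
\begin{equation*}
\frac{d}{dt}\Hr_{N,\vep}=\frac1N\sum_{i=1}^N(u(x_i)-v_i)\cdot\bigl((v_i-u(x_i))\cdot\nabla\bigr)u(x_i)+\mathcal{C}_N+\mathrm{Err},
\end{equation*}
the first term being at most $2\|\nabla u\|_{L^\infty}$ times the nonnegative kinetic part of $\Hr_{N,\vep}$. To bound $\mathrm{Err}$ I would use that pairing a $C^1$ function against $\mu_N-\rho$ is, after smearing the point masses at a scale $\eta$ and optimizing, controlled by its $C^1$-norm times $\bigl(|\Fr_N(\ux_N,\rho)|^{1/2}+C_dN^{-1/d}(1+(\ln N)1_{d=2})^{1/2}\bigr)$, together with the coercivity bound $|\Fr_N(\ux_N,\rho)|\lesssim\vep^2|\Hr_{N,\vep}|+C_dN^{-2/d}(1+(\ln N)1_{d=2})$ (from the standard lower bound on the renormalized energy) and Young's inequality; this yields $|\mathrm{Err}|\lesssim|\Hr_{N,\vep}|+\vep^2\|u\|_{C^{1,s}}^6+\frac{C_d(1+(\ln N)1_{d=2})}{N^{2/d}\vep^2}$.

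The heart of the matter — and the place where the improvement over \cite{HkI2020} lies — is the \emph{renormalized commutator estimate}
\begin{equation*}
|\mathcal{C}_N|\leq C_{d,s}\|\nabla u\|_{L^\infty}\Bigl(|\Hr_{N,\vep}|+\frac{(1+(\ln N)1_{d=2})(1+\vep^2\|\nabla u\|_{L^\infty}^2)}{N^{2/d}\vep^2}\Bigr).
\end{equation*}
The plan is to smear each $\d_{x_i}$ to a measure $\d_{x_i}^{(\eta)}$ at scale $\eta>0$, set $\mu_N^{(\eta)}\coloneqq\frac1N\sum_i\d_{x_i}^{(\eta)}$ and $h_N^{(\eta)}\coloneqq\g\ast(\mu_N^{(\eta)}-\rho)$ (so $\nabla h_N^{(\eta)}\in L^2$), and use the exact stress-tensor identity
\begin{equation*}
\int_{(\T^d)^2}(u(x)-u(y))\cdot\nabla\g(x-y)\,d(\mu_N^{(\eta)}-\rho)^{\otimes2}=2\int_{\T^d}\p_\beta u^\alpha\,\p_\alpha h_N^{(\eta)}\,\p_\beta h_N^{(\eta)}-\int_{\T^d}(\nabla\cdot u)\,|\nabla h_N^{(\eta)}|^2,
\end{equation*}
whose second integral vanishes by incompressibility and whose first integral is $\leq2\|\nabla u\|_{L^\infty}\int_{\T^d}|\nabla h_N^{(\eta)}|^2$. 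It then remains to (a) control $\int_{\T^d}|\nabla h_N^{(\eta)}|^2$ by $\Fr_N(\ux_N,\rho)$ up to an error $\lesssim N^{-1}\eta^{-(d-2)}$ (respectively $N^{-1}\ln(1/\eta)$ if $d=2$) plus contributions involving $\|\rho\|_{L^\infty}\lesssim1+\vep^2\|\nabla u\|_{L^\infty}^2$, and (b) estimate the difference between $2\vep^2\mathcal{C}_N$ and the smeared commutator, the crucial point being to exploit the H\"older bound $|\nabla u(x)-\nabla u(y)|\lesssim\|\nabla u\|_{C^{0,s}}|x-y|^s$ — rather than mere Lipschitz control — so that this difference is $\lesssim\|\nabla u\|_{C^{0,s}}\eta^{s}$ times a controlled quantity, with \emph{no} separation assumption on the $x_i$. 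Optimizing over $\eta$ (essentially $\eta\sim N^{-1/d}$ is forced) produces the $N^{-2/d}$ rate with the logarithmic correction in $d=2$; this sharp truncation estimate, replacing the weaker error of \cite{HkI2020}, is precisely what extends the admissible range from \eqref{eq:th_res} to \eqref{eq:thrng}, and step (b) is the main technical obstacle. Assembling the pieces gives $\frac{d}{dt}|\Hr_{N,\vep}|\leq C_{d,s}(1+\|\nabla u\|_{L^\infty})|\Hr_{N,\vep}|+C_{d,s}\vep^2\|u\|_{C^{1,s}}^6+\frac{C_{d,s}(1+(\ln N)1_{d=2})}{N^{2/d}\vep^2}\bigl(1+\|\nabla u\|_{L^\infty}+\vep^2\|\nabla u\|_{L^\infty}^2\bigr)$ for a.e. $t$, and Gronwall's inequality yields \eqref{eq:main}.
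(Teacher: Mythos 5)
Your overall architecture matches the paper's: differentiate the modulated energy along the coupled flow, isolate the kinetic term, the renormalized commutator, and lower-order error terms paired against $\frac1N\sum_i\d_{x_i}-1-\vep^2\Uu$, control the latter by smearing plus the coercivity of $\Fr_N$, choose $\ep\sim N^{-1/d}$, and close with Gronwall. The treatment of the error terms (your $\mathrm{Err}$, the paper's $\Te_3,\Te_4$) and the Young splitting producing $\vep^2\|u\|_{C^{1,s}}^6$ are essentially the paper's argument.

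The gap is in the one step that carries the whole improvement over \cite{HkI2020}: your step (b) for the commutator. You propose smearing all particles at a \emph{uniform} scale $\eta$ and claim the discrepancy between the renormalized commutator and its smeared version is $\lesssim\|\nabla u\|_{C^{0,s}}\eta^{s}$ times a controlled quantity, ``with no separation assumption on the $x_i$.'' For $d\geq3$ this claim fails: the kernel $K_u(x,y)=(u(x)-u(y))\cdot\nabla\g(x-y)$ is of size $|x-y|^{2-d}$ near the diagonal even after extracting $\nabla u(y)$ by H\"older continuity (the leading piece $\nabla u(y):(x-y)\otimes\nabla\g(x-y)$ is still homogeneous of degree $2-d$), so for a pair with $|x_i-x_j|\ll\eta$ the difference between $K_u(x_i,x_j)$ and its smeared value is of the order of the close-pair energy itself, with no $\eta^{s}$ smallness; the H\"older gain only tames the diagonal in $d=2$, which is consistent with the paper's remark that such refinements of the uniform-smearing comparison recover the range \eqref{eq:thrng} only in dimension two. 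To reach the sharp $N^{-2/d}$ error in all dimensions the paper does something genuinely different: it proves \cref{prop:com} by truncating at the \emph{particle-dependent} nearest-neighbor scales $\eta_i=r_{i,\ep}$ of \cref{cor:MEct} and, crucially, invoking the local $L^\infty$--$L^2$ bound of \cref{lem:HLinf} for $\nabla H_{N,\ueta_N,i}^{\mu,\ux_N}$ on $B(x_i,\eta_i)$ (the ``previously unused cancellation'' from \cite{Serfaty2021}); without this ingredient the smearing comparison yields a strictly worse rate for $d\geq3$, which is exactly why \cite{HkI2020} were limited to \eqref{eq:th_res}. Note also that the commutator estimate in the paper needs only $u$ Lipschitz; the H\"older exponent $s$ enters elsewhere, in the Besov product estimates for the $u\Uu$ and $\p_t p$ terms, so the mechanism you rely on to gain the sharp rate is misplaced. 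As written, the key estimate is asserted rather than proved, and the proposed route to it would not close in dimensions $d\geq3$.
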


Our next result, a corollary of \cref{thm:main}, shows that if $\vep=\vep(N)$ is such that the right-hand side of \eqref{eq:main} vanishes as $N\rightarrow\infty$, then $\Hr_{N,\vep}(\uz_N(t),u(t))$ converges to zero on the lifespan of the solution $u$. Moreover, the empirical measure \eqref{eq:EM} of the system \eqref{eq:NewQN} converges to the measure \eqref{eq:limEM} in the space $\M(\T^d\times\R^d)$ of finite Borel measures equipped with the weak-* topology.

\begin{cor}
\label{cor:main}
Let $u\in L^\infty([0,T]; C^{1,s}(\T^d))$ be a solution to equation \eqref{eq:Eul}, and let $\uz_N$ be a sequence of solutions to equation \eqref{eq:NewQN} with pairwise distinct initial positions. If 
\begin{equation}
\label{eq:statasmp}
\vep\xrightarrow[N\rightarrow\infty]{}0, \quad \frac{\vep^2 N^{\frac{2}{d}}}{1+(\ln N)1_{d=2}} \xrightarrow[N\rightarrow\infty]{} \infty, \quad \text{and} \quad \Hr_{N,\vep}(\uz_N(0),u(0)) \xrightarrow[N\rightarrow\infty]{} 0, 
\end{equation}
then
\begin{equation}
\frac{1}{N}\sum_{i=1}^N \d_{z_i(t)} \xrightharpoonup[N\rightarrow \infty]{*} \d(v-u(t,x))dxdv \quad \text{in $\M(\T^d\times\R^d)$ uniformly on $[0,T]$.}
\end{equation}
\end{cor}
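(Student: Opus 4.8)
The plan is to derive \cref{cor:main} from \cref{thm:main} by soft arguments, in three steps. \textbf{Step 1 (the modulated energy vanishes, uniformly in time).} I would insert the hypotheses \eqref{eq:statasmp} into the bound \eqref{eq:main}. Since $u\in L^\infty([0,T];C^{1,s})$ and $T<\infty$, every time integral appearing in \eqref{eq:main} is bounded uniformly in $t\in[0,T]$, and the exponential prefactor is at most $\exp\!\big(C_{d,s}T(1+\|u\|_{L^\infty_tC^{1,s}_x})\big)$, uniformly in $N$. The first summand on the right-hand side of \eqref{eq:main} tends to $0$ by the third assumption in \eqref{eq:statasmp}; the second, $C_{d,s}\vep^2\int_0^t\|u\|_{C^{1,s}}^6$, tends to $0$ because $\vep\to 0$; and the third tends to $0$ because $\vep\to 0$ keeps the bracket $1+\|\nabla u\|_{L^\infty}+\vep^2\|\nabla u\|_{L^\infty}^2$ bounded while $\frac{1+(\ln N)1_{d=2}}{N^{2/d}\vep^2}\to 0$ by the second assumption in \eqref{eq:statasmp}. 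Hence $\sup_{t\in[0,T]}|\Hr_{N,\vep}(\uz_N(t),u(t))|\to 0$.

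\textbf{Step 2 (unpacking the modulated energy).} Recall $\Hr_{N,\vep}=\frac1{2N}\sum_i|u(x_i)-v_i|^2+\frac1{2\vep^2}\Fr_N(\ux_N,1+\vep^2\Uu)$, the first term being nonnegative. I would invoke the by-now-standard coercivity of the renormalized Coulomb energy (cf. \cite{Serfaty2020}): for a probability density $\mu$ on $\T^d$, $\Fr_N(\ux_N,\mu)\geq -C_d\|\mu\|_{L^\infty}(1+(\ln N)1_{d=2})N^{-2/d}$. Applied with $\mu=1+\vep^2\Uu(t)$, and using $\|1+\vep^2\Uu\|_{L^\infty}\leq 1+C\vep^2\|\nabla u\|_{L^\infty}^2\leq C$ (recall $\Uu=\p_\al u^\be\p_\be u^\al$ and $\vep$ bounded), this gives $\frac1{2\vep^2}\Fr_N(\ux_N(t),1+\vep^2\Uu(t))\geq -C_d(1+(\ln N)1_{d=2})/(\vep^2N^{2/d})\to 0$ uniformly in $t$. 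Combined with Step 1, both the nonnegative term and this asymptotically-nonnegative term must vanish:
\begin{equation}
\sup_{t\in[0,T]}\frac1N\sum_{i=1}^N|u(t,x_i(t))-v_i(t)|^2\to 0,\qquad \sup_{t\in[0,T]}\Fr_N(\ux_N(t),1+\vep^2\Uu(t))\to 0.
\end{equation}
Since $u\in L^\infty_{t,x}$, the first limit also yields the uniform bound $\frac1N\sum_i|v_i(t)|^2\leq 2\|u\|_{L^\infty}^2+o(1)$, so the family $\{\frac1N\sum_i\d_{z_i(t)}\}_{N,t}$ is tight in $\M(\T^d\times\R^d)$.

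\textbf{Step 3 (passing to weak-$*$ convergence).} From the second limit in Step 2 and the standard fact that the renormalized energy controls testing against Lipschitz functions --- schematically $|\int_{\T^d}\psi\,d(\frac1N\sum_i\d_{x_i}-\mu)|\lesssim\|\nabla\psi\|_{L^\infty}\big(|\Fr_N(\ux_N,\mu)|+C_d(1+(\ln N)1_{d=2})N^{-2/d}\big)^{1/2}$ --- together with $\vep^2\|\Uu(t)\|_{L^1}\leq C\vep^2\|\nabla u\|_{L^\infty}^2\to 0$, I obtain that $\frac1N\sum_i\d_{x_i(t)}\rightharpoonup dx$ weak-$*$ on $\T^d$, with rate uniform in $t$. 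Then for a bounded Lipschitz test function $\phi$ on $\T^d\times\R^d$ I split
\begin{equation}
\begin{split}
\frac1N\sum_{i=1}^N\phi(x_i,v_i)-\int_{\T^d}\phi(x,u(t,x))\,dx &=\frac1N\sum_{i=1}^N\big(\phi(x_i,v_i)-\phi(x_i,u(t,x_i))\big)\\
&\quad+\Big(\frac1N\sum_{i=1}^N\phi(x_i,u(t,x_i))-\int_{\T^d}\phi(x,u(t,x))\,dx\Big).
\end{split}
\end{equation}
The first sum is $\leq\|\phi\|_{\mathrm{Lip}}\big(\frac1N\sum_i|v_i-u(t,x_i)|^2\big)^{1/2}\to 0$ by Step 2; the second $\to 0$ because $x\mapsto\phi(x,u(t,x))$ is Lipschitz with norm $\leq\|\phi\|_{\mathrm{Lip}}(1+\|\nabla u\|_{L^\infty})$ and the spatial empirical measure converges with rate uniform in $t$. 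A general $\phi\in C_b(\T^d\times\R^d)$ is reduced to this case by truncating in $v$, making the tails small via the uniform second-moment bound of Step 2 (the limit measure is supported in $\{|v|\leq\|u\|_{L^\infty}\}$). All estimates being uniform in $t\in[0,T]$ and weak-$*$ convergence being metrizable on the tight family at hand, this yields the claimed convergence, uniformly on $[0,T]$.

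Given \cref{thm:main}, no step above is genuinely difficult; the only non-soft inputs are the coercivity of $\Fr_N$ and its control of Lipschitz testing, both standard in the modulated-energy method, and I expect the mildest point of care to be the reduction from $C_b$ to Lipschitz test functions with compact velocity support, which the uniform second-moment bound dispatches.
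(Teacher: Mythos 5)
Your proposal is correct and follows essentially the same route as the paper: plug the hypotheses into \cref{thm:main}, use the almost-positivity of $\Fr_N$ (the paper's \cref{rem:MEnneg}) to conclude that both the kinetic part and the potential part vanish uniformly, control the velocity discrepancy by Cauchy--Schwarz and the spatial empirical measure by the Lipschitz-testing estimate (\cref{prop:MESob} with $\ep=N^{-1/d}$), and finish by the same splitting of the test function. The only cosmetic differences are that you reduce general test functions by truncation in $v$ plus a second-moment/tightness bound where the paper mollifies functions vanishing at infinity, and you strip off $\vep^2\Uu$ before testing rather than after, neither of which changes the substance.
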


We record some remarks about the assumptions in the statements of \cref{thm:main} and \cref{cor:main}.
\begin{remark}
\label{rem:reg}
The incompressible Euler equation is known to be well-posed in the space $C^{1,s}(\T^d)$; for example, see \cite[Proposition 7.16]{BCD2011}). We have not optimized the regularity requirement for $u$ in this article, as our focus is on improving the range of $\theta$ for the validity of the Euler equation as a supercritical mean-field limit. With a bit more work and at the expense of worsening the rate of convergence in $N$, it is possible to relax the regularity assumption to $u\in B_{\infty,1}^1(\T^d)$ (see \eqref{def:Bes} for definition), which is a scaling-critical space for the well-posedness of the equation. It is an interesting mathematical problem whether one can allow for weak solutions $u$, in the spirit of the author's prior work \cite{Rosenzweig2020_PVMF} (see also \cite{Rosenzweig2020_CouMF, Rosenzweig2020_SPVMF}) on the point vortex approximation in dimension $d=2$. We plan to address this question in a separate work.
\end{remark}

\begin{remark}
\label{rem:stat}
Using that $\g$ has zero mean on $\T^d$, we have
\begin{equation}
\frac{1}{\vep^2}\Fr_N(\ux_N(0), 1) = \frac{1}{\vep^{2} N^2}\sum_{1\leq i\neq j\leq N}\g(x_i(0)-x_j(0)).
\end{equation}
Consequently, if the preceding right-hand side converges to zero as $\vep\rightarrow 0$ and $N\rightarrow\infty$, and
\begin{equation}
\frac{1}{N}\sum_{i=1}^N \d_{x_i(0)} \xrightharpoonup[N\rightarrow\infty]{*} 1,
\end{equation}
then $\vep^{-2}\Fr_{N}(\ux_N(0), 1+\vep^2\Uu(0))$ tends to zero. As noted in \cite{HkI2020}, given a sequence $\eta_N\rightarrow 0$ as $N\rightarrow\infty$, one can choose initial velocities $v_i(0)$ such that $|v_i(0)-u(0,x_i(0))|\leq \eta_N$. It then follows that $\Hr_{N,\vep}(\uz_N(0), u(0)) \rightarrow 0$ as $N\rightarrow\infty$ and $\vep\rightarrow 0$. 

The assumption \eqref{eq:statasmp} is statistically relevant for reasons as follows. Suppose that for each $N\geq 1$, the initial positions $x_{1,N}^0,\ldots,x_{N,N}^0$ are independent random points in $\T^d$ with uniform law $\mu \equiv 1$. Then 
\begin{equation}
\frac{1}{\vep^2}\E\paren*{\Fr_{N}(\ux_N(0), 1}) = 0.
\end{equation}
\cref{rem:MEnneg} implies that there is a constant $C_d>0$ such that
\begin{equation}
|\Fr_N(\ux_N(0),1)| \leq \Fr_N(\ux_N(0),1) + \frac{C_d}{N}(|\ln N| 1_{d=2} + N^{1-\frac{2}{d}}1_{d\geq 3}),
\end{equation}
which in turn implies that
\begin{equation}
\frac{1}{\vep^2}\E\paren*{|\Fr_{N}(\ux_N(0), 1)|} \leq \frac{C_d}{\vep^2 N}(|\ln N| 1_{d=2} + N^{1-\frac{2}{d}}1_{d\geq 3}).
\end{equation}
Evidently, the right-hand side tends to zero if $\vep\rightarrow 0$ sufficiently slowly so that $\vep^2 N/\ln N\rightarrow \infty$, if $d=2$, and $\vep N^{1/d} \rightarrow\infty$, if $d\geq 3$, as $N\rightarrow\infty$. 
\end{remark}

Finally, let us address the sharpness of our range \eqref{eq:thrng} for $\theta$. If we assume that the additive error terms in \cref{prop:MElb} below give the optimal scaling for $\Fr_N(\cdot,\cdot)$ as $N\rightarrow\infty$,\footnote{Serfaty has pointed out to the author that in the static case with a confining potential, this optimal scaling indeed holds.} then we expect a necessary condition is that
\begin{equation}
\begin{cases}
\frac{\vep^2 N}{\ln N} \xrightarrow[N\rightarrow\infty]{} \infty, & {d=2} \\
\vep N^{1/d} \xrightarrow[N\rightarrow\infty]{} \infty, & {d\geq 3}.
\end{cases}
\end{equation}
Recalling that $\vep^2 N = N^\theta$, the preceding implies the range $1-\frac{2}{d}<\theta < 1$, which is precisely what we obtain. Thus, we believe the scaling relationship assumed in \cref{cor:main} is optimal. What should be the effective limiting dynamics at and below the threshold $\theta = 1-\frac{2}{d}$ is a very interesting question to which we currently do not know the answer.

\subsection{Comments on the proof}
\label{ssec:introPf}
Let us now briefly discuss the proof of \cref{thm:main}. \cref{cor:main} follows from \cref{thm:main} by relatively standard arguments. At a high level, our proof is inspired by that of Han-Kwan and Iacobelli \cite{HkI2020}, which is based on a Gronwall estimate for the modulated energy $\Hr_{N,\vep}(\uz_N(t),u(t))$ introduced in \eqref{eq:MEdef}. This quantity is quite similar to the modulated energy used by Duerinckx and Serfaty \cite[Appendix]{Serfaty2020} to derive the pressureless Euler-Poisson equation as the mean-field limit of the system \eqref{eq:New} in the monokinetic regime. One can also view it as an $N$-particle version of Brenier's modulated energy \cite{Brenier2000} for the quasineutral limit of Vlasov-Poisson. A key difference, though, compared to these prior works is the presence of the corrector $\vep^2 \Uu$, which leads to good structure in the temporal derivative equation for $\Hr_{N,\vep}(\uz_N(t),u(t))$. To obtain the improved $\theta$ range \eqref{eq:thrng}, though, we need to perform a more sophisticated analysis of the modulated energy to measure the balance of error terms. And for this, we draw on recent work of Serfaty \cite{Serfaty2021} on the fluctuations of Coulomb gasses at arbitrary temperature.

As shown by Han-Kwan and Iacobelli \cite[Section 2]{HkI2020}, $\Hr_{N,\vep}(\uz_N(t),u(t))$ satisfies the equation
\begin{equation}
\begin{split}
\frac{d}{dt}\Hr_{N,\vep}(\uz_N(t),u(t)) = \Te_1+\cdots+\Te_4,
\end{split}
\end{equation}
where
\begin{align}
\Te_1 &= -\frac{1}{N}\sum_{i=1}^N (u(t,x_i(t))-v_i(t))^{\otimes 2} : \nabla u(t,x_i(t)), \label{eq:MET1def}\\
\Te_2 &= \frac{1}{2\vep^2}\int_{(\T^d)^2\setminus\D_2} \paren*{u(t,x)-u(t,y)}\cdot\nabla\g(x-y)d(\frac{1}{N}\sum_{i=1}^N\d_{x_i(t)}-1-\vep^2 \Uu(t))^{\otimes 2}(x,y) \label{eq:MET2def}\\
\Te_3 &= -\int_{\T^d}\nabla\cdot\Dm^{-2}(u\U)(t,x)d(\frac{1}{N}\sum_{i=1}^N\d_{x_i(t)}-1-\vep^2\Uu(t))(x) \label{eq:MET3def}\\
\Te_4 &= -\int_{\T^d} (\p_t p )(t,x)d(\frac{1}{N}\sum_{i=1}^N\d_{x_i(t)}-1-\vep^2\Uu(t))(x). \label{eq:MET4def}
\end{align}
Above, $:$ denotes the Frobenius inner product for $d\times d$ matrices. Due to the prefactor of $\vep^{-2}$, $\Te_2$ is the most challenging term. Therefore, we concentrate on it for the purposes of this discussion.

$\Te_2$ has the structure of a commutator which has been \emph{renormalized} through the excision of the diagonal and has been averaged against the measure
\begin{equation}
\label{eq:commmes}
\frac{1}{N}\sum_{i=1}^N\d_{x_i} - 1-\vep^2\Uu.
\end{equation}
Indeed, ignoring the excision of the diagonal, the integration in $y$ corresponds to the commutator
\begin{equation}
\comm{u}{\nabla\Dm^{-2}}\paren*{\frac{1}{N}\sum_{i=1}^N\d_{x_i} - 1-\vep^2\Uu}.
\end{equation}
$\Te_2$ may also be written as the divergence of the stress-energy tensor of the potential of \eqref{eq:commmes} integrated against the vector field $u$. Of course, the preceding considerations are completely formal because we have excised the diagonal in $\Te_2$ and because the Dirac mass is too singular for either the commutator or stress-energy tensor to make sense. However, Serfaty showed \cite[Proposition 1.1]{Serfaty2020} using a smearing procedure for the Dirac masses (see \cref{ssec:MEbas}) that one can add back the diagonal and use this stress-energy tensor idea to control expressions of the form $\Te_2$ up to additive error terms quantifiably small as $N\rightarrow\infty$. Using this result, Han-Kwan and Iacobelli obtained an estimate for $\Te_2$ in terms of the quantity \eqref{eq:MEpdef}. The source of the restriction $\vep N^{\frac{1}{d(d+1)}}\rightarrow \infty$ as $N\rightarrow\infty$, equivalently $1-\frac{2}{d(d+1)}<\theta<1$, is precisely the estimate for the aforementioned additive error terms that \cite[Proposition 1.1]{Serfaty2020} gives.

In \cite{Rosenzweig2020_PVMF} (see also \cite{Rosenzweig2020_CouMF, Rosenzweig2020_SPVMF}), the author complemented Serfaty's smearing procedure with the new idea of tracking the relative size of these additive terms through a series of small, possibly time-dependent parameters. One keeps these parameters unspecified until the conclusion of the proof, where they are chosen to optimize the balance of all the error terms accumulated to estimate expressions of the form $\Te_2$. We could implement a similar idea in this article, which ultimately gives a better estimate for the additive error terms in Serfaty's original smearing procedure. But this would only yield the range \eqref{eq:thrng} in dimension $d=2$. In order to obtain the stated range \eqref{eq:thrng} in all dimensions $d\geq 2$, we instead use an improved estimate (see \cref{prop:com} in \cref{ssec:MEcomm} below for details) for expressions of the form \eqref{eq:MET2def} recently obtained by Serfaty \cite{Serfaty2021}, which yields the (believed) sharp $N$-dependence for the additive errors.

\begin{remark}
\label{rem:gyro}
Han-Kwan and Iacobelli \cite[Theorem 1.2]{HkI2020} also consider the combined mean-field and \emph{gyrokinetic} limit for particles interacting in the presence of a strong magnetic field in dimension $d=2$ using an analogous modulated-energy approach. The $N$-body problem is now
\begin{equation}
\begin{cases}
\vep \dot{x}_i = v_i \\
\vep\dot{v}_i = -\frac{1}{N}\sum_{{1\leq j\leq N}\atop{j\neq i}}\nabla\g(x_i-x_j) + \frac{v_i^\perp}{\vep},
\end{cases}
\end{equation}
and the limiting behavior is again governed by the incompressible Euler equation. We expect our analysis to improve their scaling restriction $\vep N^{1/6}\rightarrow \infty$ as $N\rightarrow\infty$ to $\vep (N/\ln N)^{1/2} \rightarrow\infty$ as $N\rightarrow\infty$.
\end{remark}

\subsection{Organization of article}
\label{ssec:introOrg}
We briefly comment on the organization of the body of the article. In \cref{sec:pre}, we introduce basic notation, function spaces, and properties of the Coulomb potential used without further comment throughout the article. In \cref{sec:ME}, we review Serfaty's smearing procedure and properties of the modulated potential energy $\Fr_{N}(\cdot,\cdot)$. Since the existing literature almost exclusively considers the case of $\R^d$, we focus on the modifications necessary for the periodic setting, as they do not seem to be presented in the literature. We also give in \cref{ssec:MEcomm} the proof of our ``commutator'' estimate for expressions of the form \eqref{eq:MET2def}, which is the workhorse of this article. Finally, in \cref{sec:MR}, we prove our main results, \cref{thm:main} and \cref{cor:main}.

\subsection{Acknowledgments}
The author thanks Sylvia Serfaty for valuable discussion related to this project, in particular explaining to him the expected optimal scaling for the modulated potential energy and for informing him of the result \cite[Proposition 4.3]{Serfaty2021}, which is crucial to obtaining the full range \eqref{eq:thrng} in dimensions $d\geq 3$. The author also thanks Willie Wong for helpful comments on the multiplicative algebra properties of Besov spaces, which informed the regularity assumptions on the solution $u$ to \eqref{eq:Eul}. The author gratefully acknowledges funding from the Simons Foundation through the Simons Collaboration on Wave Turbulence.

\section{Preliminaries}
\label{sec:pre}
In this section, we introduce the basic notation of the article, as well as the relevant function spaces, and some facts from harmonic analysis to which we frequently appeal.

\subsection{Notation}
\label{ssec:preNot}
Given nonnegative quantities $A$ and $B$, we write $A\lesssim B$ if there exists a constant $C>0$, independent of $A$ and $B$, such that $A\leq CB$. If $A \lesssim B$ and $B\lesssim A$, we write $A\sim B$. To emphasize the dependence of the constant $C$ on some parameter $p$, we sometimes write $A\lesssim_p B$ or $A\sim_p B$.

We denote the natural numbers excluding zero by $\N$ and including zero by $\N_0$. Similarly, we denote the nonnegative real numbers by $\R_{\geq 0}$ and the positive real numbers by $\R_+$. Given $N\in\N$ and points $x_{1,N},\ldots,x_{N,N}$ in some set $X$, we will write $\ux_N$ to denote the $N$-tuple $(x_{1,N},\ldots,x_{N,N})$. We define the generalized diagonal $\Delta_N$ of the Cartesian product $X^N$ to be the set
\begin{equation}
\Delta_N \coloneqq \{(x_1,\ldots,x_N) \in X^N : x_i=x_j \text{ for some $i\neq j$}\}.
\end{equation}
Given $x\in\T^d$ and $r>0$, we denote the ball and sphere centered at $x$ of radius $r$ by $B(x,r)$ and $\p B(x,r)$, respectively. Given a function $f$, we denote the support of $f$ by $\supp f$. 

We denote the space of complex-valued Borel measures on $\T^d$ by $\M(\T^d)$. We denote the subspace of probability measures (i.e. elements $\mu\in\M(\T^d)$ with $\mu\geq 0$ and $\mu(\T^d)=1$) by $\P(\T^d)$. When $\mu$ is in fact absolutely continuous with respect to Lebesgue measure on $\T^d$, we shall abuse notation by writing $\mu$ for both the measure and its density function. We denote the Banach space of complex-valued continuous functions on $\T^d$ by $C(\T^d)$ equipped with the uniform norm $\|\cdot\|_{\infty}$. More generally, we denote the Banach space of $k$-times continuously differentiable functions with bounded derivatives up to order $k$ by $C^k(\T^d)$ equipped with the natural norm, and we define $C^\infty \coloneqq \bigcap_{k=1}^\infty C^k$. We denote the subspace of smooth functions with compact support by $C_c^\infty(\T^d)$, and use the subscript $0$ to indicate functions vanishing at infinity. For $p\in [1,\infty]$ and $\Omega\subset\T^d$, we define $L^p(\Omega)$ to be the usual Banach space equipped with the norm
\begin{equation}
\|f\|_{L^p(\Omega)} \coloneqq \paren*{\int_\Omega |f(x)|^p dx}^{1/p}
\end{equation}
with the obvious modification if $p=\infty$. When $f: \Omega\rightarrow X$ takes values in some Banach space $(X,\|\cdot\|_{X})$, we shall write $\|f\|_{L^p(\Omega;X)}$.

Our convention for Fourier coefficients is
\begin{align}
\wh{f}(k) &\coloneqq \int_{\T^d}f(x)e^{-ik\cdot x}dx , \qquad k\in\Z^d\\
f(x) &= (2\pi)^{-d} \sum_{k\in\Z^d}\wh{f}(k)e^{ik\cdot x}, \qquad x\in \T^d.
\end{align}

\subsection{Besov spaces}
\label{ssec:preBes}
Let $\Dc'(\T^d)$ denote the dual of the space $C^\infty(\T^d)$. As is well-known (e.g. see \cite[Section 3.2]{ST1987}), elements of $\Dc'(\T^d)$ are precisely those tempered distributions on $\R^d$ for which there exists a sequence of coefficients $\{a_k\}_{k\in\Z^d}$ with $|a_k| \lesssim \jp{k}^m$ for some $m\in\N$, such that the series
\begin{equation}
\sum_{k\in\Z^d} a_k e^{ik\cdot x}
\end{equation}
converges in $\Sc'(\R^d)$.

To introduce the Besov scale of function spaces, we introduce a Littlewood-Paley partition of unity as follows. Let $\phi\in C_{c}^{\infty}(\R^d)$ be a radial, nonincreasing function, such that $0\leq \phi\leq 1$ and
\begin{equation}
\label{eq:idphi}
\phi(x)=
\begin{cases}
1, & {|x|\leq 1}\\
0, & {|x|>3/2}.
\end{cases}
\end{equation}
Define
\begin{align}
1&=\phi(x)+\sum_{j=1}^{\infty}[\phi(2^{-j}x)-\phi(2^{-j+1}x)] \eqqcolon \phi(x)+\sum_{j=1}^{\infty}\psi_{j}(x), \qquad \forall x\in\R^d.
\end{align}
Given a distribution $f\in\Dc'(\T^d)$, we define the Littlewood-Paley projections
\begin{equation}
\begin{split}
P_0 f(x) &\coloneqq (2\pi)^{-d}\sum_{k\in\Z^d}\phi(k)\wh{f}(k)e^{ik\cdot x}, \\
P_j f(x) &\coloneqq (2\pi)^{-d}\sum_{k\in\Z^d} \psi_j(k)\wh{f}(k)e^{ik\cdot x}, \qquad j\geq 1,
\end{split}
\end{equation}
and $P_j f\coloneqq 0$ for $j\leq -1$. Evidently, each of the projections $P_j f\in C^\infty(\T^d)$. 

\begin{mydef}
\label{def:Bes}
Let $s\in\R$ and $1\leq p,q\leq\infty$. We define the inhomogeneous Besov space $B_{p,q}^s(\T^d)$ to be the space of $f\in\Dc'(\T^d)$ such that
\begin{equation}
\|f\|_{B_{p,q}^s(\T^d)} \coloneqq \paren*{\sum_{j=0}^\infty 2^{jqs}\|P_jf\|_{L^p(\T^d)}^q}^{1/q} < \infty.
\end{equation}
\end{mydef}

\begin{remark}
The Besov scale includes a number of well-known function spaces (see \cite[Section 3.5]{ST1987} for full details). For example, if $s\geq 0$ is not an integer, then $B_{\infty,\infty}^s = C^{\lfloor{s}\rfloor, s-\lfloor{s}\rfloor}$, while if $s$ is an integer, we have the inclusions
\begin{equation}
B_{\infty,1}^s \subset C^s \subset B_{\infty,\infty}^s.
\end{equation}
Similarly, for any $1\leq p<\infty$, we have the inclusions
\begin{equation}
B_{p,1}^0 \subset L^p \subset B_{p,\infty}^0.
\end{equation}
If $p=q=2$, then $B_{2,2}^s$ coincides with the usual Sobolev space $H^s$.
\end{remark}

The next two results on embeddings in the Besov scale are classic. The reader may consult \cite[Sections 2.7, 2.8]{BCD2011} for the omitted details.
\begin{lemma}
\label{lem:besemb}
Let $1\leq p_1\leq p_2\leq \infty$, $1\leq q_1\leq q_2\leq\infty$, and $s\in\R$. Then $B_{p_1,q_1}^s(\T^d)$ continuously embeds in $B_{p_2,q_2}^{s-\frac{d}{p_1}+\frac{d}{p_2}}(\T^d)$. Additionally, if $s\geq 0$, then $B_{\infty,1}^s(\T^d)$ continuously embeds in $C^s(\T^d)$.
\end{lemma}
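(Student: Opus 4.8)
The plan is to reduce \cref{lem:besemb} to \emph{Bernstein's inequality} on $\T^d$, which is the only substantive ingredient. Recall its statement: if $f\in\Dc'(\T^d)$ satisfies $\supp\wh{f}\subset\{k\in\Z^d: |k|\leq R\}$ for some $R\geq 1$, and $1\leq p\leq q\leq\infty$, then for every multi-index $\alpha$,
\begin{equation}
\label{eq:BernTorus}
\|\partial^\alpha f\|_{L^q(\T^d)} \lesssim_{d,|\alpha|} R^{|\alpha|+d(\frac1p-\frac1q)}\|f\|_{L^p(\T^d)}.
\end{equation}
To prove \eqref{eq:BernTorus}, I would fix $\wt\chi\in C_c^\infty(\R^d)$ with $\wt\chi\equiv 1$ on $\{|\xi|\leq 1\}$ and $\supp\wt\chi\subset\{|\xi|\leq 2\}$, set $\chi_R(\xi)\coloneqq\wt\chi(\xi/R)$, and write $\partial^\alpha f=K_R^\alpha * f$ (convolution on $\T^d$) with $K_R^\alpha(x)\coloneqq (2\pi)^{-d}\sum_{k\in\Z^d}(ik)^\alpha\chi_R(k)e^{ik\cdot x}$. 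By Poisson summation, $K_R^\alpha$ is the periodization of $x\mapsto R^{d+|\alpha|}\big(\F^{-1}[(i\xi)^\alpha\wt\chi]\big)(Rx)$; the rapid decay of $\F^{-1}[(i\xi)^\alpha\wt\chi]$ then gives $\|K_R^\alpha\|_{L^r(\T^d)}\lesssim_{d,|\alpha|} R^{|\alpha|+d(1-\frac1r)}$ for all $R\geq 1$ and $1\leq r\leq\infty$. Young's convolution inequality on $\T^d$, with the exponent $r$ chosen so that $1+\frac1q=\frac1r+\frac1p$, then yields \eqref{eq:BernTorus}.

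Granting \eqref{eq:BernTorus}, the first embedding is immediate. Since $\supp\wh{P_jf}\subset\{|k|\lesssim 2^j\}$ for every $j\geq 0$, applying \eqref{eq:BernTorus} with $\alpha=0$, $R\sim 2^j$, $p=p_1$, $q=p_2$ gives
\begin{equation}
2^{j(s-\frac{d}{p_1}+\frac{d}{p_2})}\|P_jf\|_{L^{p_2}} \lesssim 2^{js}\|P_jf\|_{L^{p_1}},\qquad j\geq 0.
\end{equation}
Taking the $\ell^{q_2}(\N_0)$-norm in $j$ and using the elementary inclusion $\ell^{q_1}\subset\ell^{q_2}$ (valid because $q_1\leq q_2$) applied to the sequence $(2^{js}\|P_jf\|_{L^{p_1}})_{j\geq 0}$ yields $\|f\|_{B_{p_2,q_2}^{s-d/p_1+d/p_2}}\lesssim\|f\|_{B_{p_1,q_1}^s}$, as desired.

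For the second statement, I would split on the nature of $s\geq 0$. If $s$ is not an integer, then $C^s=B_{\infty,\infty}^s$ (as recalled above), and the embedding $B_{\infty,1}^s\hookrightarrow B_{\infty,\infty}^s$ follows from $\sup_{j\geq 0}2^{js}\|P_jf\|_\infty\leq\sum_{j\geq 0}2^{js}\|P_jf\|_\infty$. If $s=m\in\N_0$, then for every multi-index $\alpha$ with $|\alpha|\leq m$, \eqref{eq:BernTorus} with $p=q=\infty$, $R\sim 2^j$ gives $\|\partial^\alpha P_jf\|_\infty\lesssim 2^{j|\alpha|}\|P_jf\|_\infty\leq 2^{jm}\|P_jf\|_\infty$ (using $j\geq 0$), so $\sum_{j\geq 0}\|\partial^\alpha P_jf\|_\infty\lesssim\|f\|_{B_{\infty,1}^m}<\infty$. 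Hence $\sum_j P_jf$ and each of its derivatives of order $\leq m$ converge uniformly on $\T^d$; the sum is $f$, which therefore lies in $C^m(\T^d)$ with $\|f\|_{C^m}\lesssim\|f\|_{B_{\infty,1}^m}$.

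The only genuinely torus-specific point, and hence the main (mild) obstacle, is the kernel estimate $\|K_R^\alpha\|_{L^r(\T^d)}\lesssim R^{|\alpha|+d(1-1/r)}$: on $\R^d$ this is a one-line scaling computation, whereas on $\T^d$ it requires the periodization/Poisson-summation argument together with the Schwartz decay of $\F^{-1}[(i\xi)^\alpha\wt\chi]$. Everything else — the block-by-block application of Bernstein and the nesting of the $\ell^q$ spaces — is routine. One also notes that the partial sums $\sum_{j\leq J}P_jf$ are smooth, so the uniform convergence just established legitimately identifies the $C^m$ limit with the distribution $f$.
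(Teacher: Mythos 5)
Your proof is correct. The paper itself does not prove \cref{lem:besemb}: it labels the result classical and refers to \cite[Sections 2.7, 2.8]{BCD2011}, so there is no in-text argument to compare against; what you have written is essentially the standard proof that the citation delegates to, namely Bernstein's inequality applied block-by-block to the Littlewood--Paley pieces followed by the nesting $\ell^{q_1}\subset\ell^{q_2}$, and, for the H\"older part, the split into non-integer $s$ (where one invokes $B_{\infty,\infty}^s=C^{\lfloor s\rfloor,\,s-\lfloor s\rfloor}$, which the paper records in its remark with a pointer to \cite{ST1987}) and integer $s=m$ (where uniform convergence of $\sum_{j}\partial^\alpha P_jf$ for $|\alpha|\leq m$ identifies the distributional limit $f$ as a $C^m$ function, as you note). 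The one genuinely torus-specific ingredient, which \cite{BCD2011} treats on $\R^d$ by scaling, is the kernel bound behind Bernstein, and your periodization/Poisson-summation estimate $\|K_R^\alpha\|_{L^r(\T^d)}\lesssim R^{|\alpha|+d(1-1/r)}$, uniform in $R\geq 1$, together with Young's inequality on $\T^d$, handles it correctly; the exponent bookkeeping $|\alpha|+d(1-1/r)=|\alpha|+d(1/p-1/q)$ with $1+1/q=1/r+1/p$ is right, and the support condition $\wt\chi(k/R)=1$ for $|k|\leq R$ guarantees $\partial^\alpha f=K_R^\alpha\ast f$ on the relevant blocks. So your write-up supplies a self-contained proof of a statement the paper leaves to the literature, at the same level of rigor the paper assumes elsewhere (e.g., it uses the H\"older--Besov identification only as stated in the remark).
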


\begin{lemma}
\label{prop:Bony}
For $s>0$ and $1\leq p,q\leq\infty$, the space $L^\infty(\T^d)\cap B_{p,q}^s(\T^d)$ is an algebra and
\begin{equation}
\|fg\|_{B_{p,q}^s} \lesssim_{s,d} \|f\|_{L^\infty}\|g\|_{B_{p,q}^s} + \|g\|_{L^\infty}\|f\|_{B_{p,q}^s}.
\end{equation}
\end{lemma}

\subsection{The Coulomb potential}
We recall from the introduction that $\g$ denotes the element of $\Dc'(\T^d)$
\begin{equation}
\label{eq:gFS}
\frac{1}{(2\pi)^d}\sum_{{k\in\Z^d} : {k\neq 0}} \frac{e^{ik\cdot x}}{|k|^2},
\end{equation}
which is, in fact, an element of $C^\infty(\T^d\setminus\{0\})$ satisfying
\begin{equation}
-\D\g = \d_{0} - 1.
\end{equation}
Let $\g_{\R^d}$ denote the Coulomb potential on $\R^d$:
\begin{equation}
\label{eq:gRd}
\g_{\R^d}(x) \coloneqq  \begin{cases}
-(\ln|x|)/2\pi, & {d=2} \\
c_d|x|^{-d+2}, & {d\geq 3},
\end{cases}
\end{equation}
where the normalizing constant $c_d = \frac{1}{d(d-2)|B(0,1)|}$. As is well-known, there exists a function $\g_{loc} \in C^\infty(\ol{B(0,1/4)})$ such that 
\begin{equation}
\label{eq:gasy}
\g(x) = \g_{\R^d}(x) + \g_{loc}(x), \qquad \forall x\in\ol{B(0,1/4)}.
\end{equation}

The next lemma will be quite useful in the proof of \cref{prop:com}.
\begin{lemma}
\label{lem:gnab}
Let $d\geq 2$ and $d<p\leq\infty$. Then for all $f\in L^p(\T^d)$,
\begin{equation}
\|\nabla\Dm^{-2} f\|_{L^\infty} = \|\nabla\g\ast f\|_{L^\infty} \lesssim_{p,d} \|f\|_{L^p}.
\end{equation}
\end{lemma}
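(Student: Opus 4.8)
The plan is to reduce the bound to Young's convolution inequality together with a local integrability property of $\nabla\g$. First I note that $\nabla\Dm^{-2}f$ and $\nabla\g\ast f$ have the same Fourier coefficients (namely $ik|k|^{-2}\wh{f}(k)$ for $k\neq 0$, and $0$ for $k=0$), so the asserted equality of the two quantities is immediate and it suffices to estimate $\|\nabla\g\ast f\|_{L^\infty(\T^d)}$. By Young's inequality on $\T^d$,
\[
\|\nabla\g\ast f\|_{L^\infty(\T^d)} \leq \|\nabla\g\|_{L^{p'}(\T^d)}\,\|f\|_{L^p(\T^d)},
\]
where $p'$ is the H\"older conjugate of $p$; the hypothesis $d<p\leq\infty$ is exactly equivalent to $1\leq p'<\tfrac{d}{d-1}$. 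Hence the whole statement follows once I show $\nabla\g\in L^{p'}(\T^d)$ for every such $p'$, with norm controlled by $p$ and $d$.

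To establish this, I would split $\T^d$ into $B(0,1/4)$ and its complement. On $\T^d\setminus B(0,1/4)$ the potential $\g$ is smooth (being an element of $C^\infty(\T^d\setminus\{0\})$), so $\nabla\g$ is bounded there. On $B(0,1/4)$ I invoke the decomposition \eqref{eq:gasy}, writing $\g=\g_{\R^d}+\g_{loc}$ with $\g_{loc}\in C^\infty(\ol{B(0,1/4)})$; then $\nabla\g_{loc}$ is bounded on $B(0,1/4)$, while from \eqref{eq:gRd} we obtain the pointwise bound $|\nabla\g_{\R^d}(x)|\lesssim_d |x|^{-(d-1)}$. Passing to polar coordinates, $\int_{B(0,1/4)}|x|^{-(d-1)p'}\,dx<\infty$ precisely when $(d-1)p'<d$, i.e. exactly in the range of $p'$ identified above, and the resulting bound depends only on $p$ and $d$. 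Combining the two regions gives $\nabla\g\in L^{p'}(\T^d)$, and plugging this into the Young estimate completes the proof.

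The argument is essentially routine; the only point that requires any attention is the borderline integrability computation near the origin and the verification that the threshold it produces, $(d-1)p'<d$, matches the hypothesis $p>d$ under H\"older duality. (Equivalently, one may argue directly by applying H\"older's inequality inside the convolution integral $\int_{\T^d}\nabla\g(x-y)f(y)\,dy$, which amounts to the same computation.)
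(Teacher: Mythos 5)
Your proof is correct and follows essentially the same route as the paper's: both rest on the decomposition \eqref{eq:gasy} near the origin, smoothness of $\g$ and $\g_{loc}$ elsewhere, and the integrability threshold $(d-1)p'<d$ coming from $p>d$. Packaging the estimate as Young's inequality with $\nabla\g\in L^{p'}(\T^d)$ rather than applying H\"older inside the split convolution integral is, as you note yourself, the same computation in different clothing.
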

\begin{proof}
The first identity is tautological. For $x\in \T^d$, we write
\begin{align}
\int_{\T^d}\nabla\g(x-y)f(y)dy &= \int_{|x-y|\leq 1/4}\nabla\g(x-y)f(y)dy + \int_{|x-y|>1/4}\nabla\g(x-y)f(y)dy \nn\\
&=\int_{|x-y|\leq 1/4}\nabla\g_{\R^d}(x-y)f(y)dy + \int_{|x-y|\leq 1/4} \nabla\g_{loc}(x-y)f(y)dy \nn\\
&\ph+ \int_{|x-y|>1/4}\nabla\g(x-y)f(y)dy.
\end{align}
Since $\g_{loc}$ is $C^\infty$ on $\ol{B(0,1/4)}$ and since $\g$ is $C^\infty$ away from the origin, it follows from H\"older that
\begin{equation}
\int_{|x-y|\leq1/4}|\nabla\g_{loc}(x-y)f(y)|dy + \int_{|x-y|>1/4}|\nabla\g(x-y)f(y)|dy \lesssim_{d,p} \|f\|_{L^p}.
\end{equation}
Again applying H\"older,
\begin{equation}
\int_{|x-y|\leq 1/4}|\nabla\g_{\R^d}(x-y)f(y)|dy \leq \|\nabla\g_{\R^d}\|_{L^{p'}(B(0,1/4))} \|f\|_{L^p}.
\end{equation}
Since $(d-1)p'<d$ by assumption that $p>d$, we see that the proof is complete.
\end{proof}

\section{The modulated energy}
\label{sec:ME}
In this section, we discuss the properties of the potential part of the $N$-particle modulated energy,
\begin{equation}
\Fr_{N}(\ux_N,\mu) = \int_{(\T^d)^2\setminus\D_2} \g(x-y) d(\frac{1}{N}\sum_{i=1}^N\d_{x_i} - \mu)^{\otimes 2}(x,y).
\end{equation}
Most of the results in this section have been established in one form or another in the works using the modulated-energy method, such as \cite{Serfaty2017, Duerinckx2016, Serfaty2020, Rosenzweig2020_PVMF, Rosenzweig2020_CouMF, Rosenzweig2020_SPVMF}. Therefore, we shall be somewhat terse in our remarks, focusing on the proof modifications necessary to adapt the comparable result in literature to the periodic setting, where some subtleties arise due to the fact that the potential $\g$ is no longer explicit.

\subsection{Smearing and truncation}
\label{ssec:MEbas}
Given $\eta\in (0,1/4)$, let us truncate the potential $\g$ by defining
\begin{equation}
\tl{\g}_\eta(x) \coloneqq \begin{cases} \g(x), & x\in \T^d\setminus B(0,\eta) \\ \g_{\R^d}(\eta) + \g_{loc}(x), & x\in B(0,\eta). \end{cases}
\end{equation}
To obtain a function with zero mean, we now consider
\begin{equation}
\label{eq:geta}
\g_\eta(x) \coloneqq \tl{\g}_\eta(x) - {\int_{\T^d}\tl{\g}_\eta(x)dx} \eqqcolon \tl{\g}_\eta(x) - c_{\g,\eta}.
\end{equation}
We now introduce a distribution $\d_0^{(\eta)}$ by setting
\begin{equation}
\d_0^{(\eta)} - 1\coloneqq -\D\g_\eta.
\end{equation}

\begin{lemma}
Let $\sigma_{\p B(0,\eta)}$ denote the uniform probability measure on the sphere $\p B(0,\eta)$. Then
\begin{equation}
\label{eq:deta}
\d_0^{(\eta)} = \sigma_{\p B(0,\eta)} \quad \text{in} \quad \mathcal{D}'(\T^d).
\end{equation}
\end{lemma}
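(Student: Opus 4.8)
The plan is to compute $-\Delta \g_\eta$ explicitly by splitting $\T^d$ into the ball $B(0,\eta)$ and its complement, and identifying the distributional Laplacian across the interface $\p B(0,\eta)$. Since adding the constant $c_{\g,\eta}$ does not affect the Laplacian, it suffices to compute $-\Delta \tl{\g}_\eta$. Away from $\p B(0,\eta)$, the truncated potential agrees either with $\g$ (outside the ball), where $-\Delta\g = \d_0 - 1$ but the $\d_0$ is not seen since we are away from the origin, so $-\Delta \tl\g_\eta = -1$ there; or with the smooth function $\g_{\R^d}(\eta) + \g_{loc}(x)$ (inside the ball), where, using \eqref{eq:gasy} and $-\Delta\g_{\R^d} = \d_0$ on $\R^d$, one has $-\Delta(\g_{\R^d}(\eta)+\g_{loc}) = -\Delta \g_{loc} = -\Delta(\g - \g_{\R^d}) = (\d_0 - 1) - \d_0 = -1$ on $B(0,1/4)\setminus\{0\}$, and in fact $-\Delta\g_{loc}=-1$ on all of $B(0,1/4)$ since $\g_{loc}\in C^\infty(\ol{B(0,1/4)})$. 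So on $\T^d \setminus \p B(0,\eta)$ we have $-\Delta \tl\g_\eta = -1$, matching the claim $\d_0^{(\eta)} - 1 = \sigma_{\p B(0,\eta)} - 1$ away from the sphere.

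The content of the lemma is therefore the jump across the sphere. First I would check that $\tl\g_\eta$ is continuous across $\p B(0,\eta)$: the interior value on the sphere is $\g_{\R^d}(\eta) + \g_{loc}(x)$ and the exterior value is $\g(x) = \g_{\R^d}(x) + \g_{loc}(x) = \g_{\R^d}(\eta) + \g_{loc}(x)$ for $|x|=\eta$, since $\g_{\R^d}$ is radial; so there is no single-layer (Dirac-on-the-sphere from a jump in the function) contribution. Then, by the standard jump formula for the distributional Laplacian of a function that is continuous but has a jump in its normal derivative across a smooth hypersurface $S$, namely $\Delta_{\mathcal D'} f = \{\Delta f\} + \big(\p_\nu f|_{+} - \p_\nu f|_{-}\big)\, \mathcal H^{d-1}\!\!\restriction_S$ (with $\nu$ the outward normal), I would compute the jump in the radial derivative. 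Inside, $\p_r(\g_{\R^d}(\eta) + \g_{loc}) = \p_r \g_{loc}$; outside, $\p_r \g = \p_r\g_{\R^d} + \p_r\g_{loc}$; so the jump $(\text{outer}) - (\text{inner})$ equals $\p_r\g_{\R^d}(\eta)$. A direct computation gives $\p_r \g_{\R^d}(\eta) = -\tfrac{1}{2\pi\eta}$ for $d=2$ and $\p_r\g_{\R^d}(\eta) = -(d-2)c_d\,\eta^{-d+1} = -\tfrac{1}{d|B(0,1)|}\eta^{-(d-1)} = -\tfrac{1}{|\p B(0,\eta)|}$ for $d\geq 3$; in both cases this is $-1/|\p B(0,\eta)|$. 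Hence the distributional identity reads $\Delta \tl\g_\eta = -1 - \tfrac{1}{|\p B(0,\eta)|}\mathcal H^{d-1}\!\!\restriction_{\p B(0,\eta)} = -1 - \sigma_{\p B(0,\eta)}$, i.e. $-\Delta\g_\eta = \d_0^{(\eta)} - 1 = \sigma_{\p B(0,\eta)} - 1$, which is exactly \eqref{eq:deta}.

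An alternative, perhaps cleaner, route I would present is to test against $\varphi \in C^\infty(\T^d)$: write $\langle -\Delta \g_\eta, \varphi\rangle = -\int_{\T^d}\g_\eta \Delta\varphi$, split the integral over $B(0,\eta)$ and $\T^d\setminus B(0,\eta)$, integrate by parts twice on each piece (using that $\g_\eta$ is smooth on each closed piece), and collect the boundary terms on $\p B(0,\eta)$; the function-value boundary terms cancel by continuity of $\tl\g_\eta$, the $-\int (-\Delta\g_\eta)\varphi$ bulk terms give $-\int_{\T^d}(-1)\varphi = \int \varphi$ minus the origin issue (handled since $\g_\eta$ is bounded near $0$, so no $\d_0$ survives), and the surviving normal-derivative boundary term produces exactly $\int_{\p B(0,\eta)}\varphi \, d\sigma_{\p B(0,\eta)}$ after dividing by $|\p B(0,\eta)|$. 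The main obstacle — really the only subtle point — is getting the jump computation and its sign right, in particular verifying the normalization $\p_r\g_{\R^d}(\eta) = -1/|\p B(0,\eta)|$ uniformly in $d$; everything else is routine integration by parts. I would also remark that this is consistent with the total-mass constraint, since both $\d_0^{(\eta)} = \sigma_{\p B(0,\eta)}$ and the original $\d_0$ have unit mass, so $\g_\eta$ indeed solves the same mean-zero Poisson problem with the Dirac mass replaced by its uniform spreading over the sphere.
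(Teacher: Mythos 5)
Your proof is correct and essentially the paper's: the paper establishes the lemma exactly by your ``alternative route'' (testing against $\varphi\in C^\infty(\T^d)$, integrating by parts twice on $B(0,\eta)$ and its complement, with the normal-derivative jump $-\p_r\g_{\R^d}(\eta)=1/\H^{d-1}(\p B(0,\eta))$ producing the spherical average and the bulk terms giving $-\int_{\T^d}\varphi$), and your primary route is just the packaged jump-formula version of that same computation. One sign slip to fix: from your own ingredients ($-\D\tl{\g}_\eta=-1$ off the sphere and jump $\p_r\g_{\R^d}(\eta)=-1/\H^{d-1}(\p B(0,\eta))$) the intermediate identity should read $\D\tl{\g}_\eta=1-\sigma_{\p B(0,\eta)}$, not $-1-\sigma_{\p B(0,\eta)}$; your stated conclusion $-\D\g_\eta=\sigma_{\p B(0,\eta)}-1$ is nonetheless the correct one.
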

\begin{proof}
Let $\varphi\in C^\infty(\T^d)$ be a test function. Integrating by parts twice,
\begin{align}
\int_{\T^d}-\D\varphi(x)\g_\eta(x)dx &= \int_{|x|<\eta}-\D\varphi(x)\paren*{\g_{\R^d}(\eta)+\g_{loc}(x)}dx + \int_{|x|\geq\eta}-\D\varphi(x)\g_\eta(x)dx \nn\\
&=\int_{|x|<\eta}\na\varphi(x)\cdot\na\g_{loc}(x)dx +  \int_{|x|\geq\eta}\nabla\varphi(x)\cdot\nabla\g(x)dx \nn\\
&=\int_{|x|=\eta}\varphi(x)\na\g_{loc}(x)\cdot\frac{x}{|x|}d\H^{d-1}(x) - \int_{|x|<\eta}\varphi(x)\D\g_{loc}(x)dx \nn\\
&\ph + \int_{|x|=\eta}\varphi(x)\nabla\g(x)\cdot\frac{-x}{|x|}d\H^{d-1}(x) - \int_{|x|\geq\eta}\varphi(x)\D\g(x)dx,
\end{align}
where $\H^{d-1}$ denotes the $(d-1)$-dimensional Hausdorff measure. Observe that for $|x|=\eta$,
\begin{equation}
\na\g_{loc}(x)\cdot\frac{x}{|x|} - \na\g(x)\cdot\frac{x}{|x|} = -\na\g_{\R^d}(x)\cdot\frac{x}{|x|} = (d-2)c_d\frac{x}{|x|^{d}}\cdot\frac{x}{|x|} = \frac{1}{d|B(0,1)|\eta^{d-1}}.
\end{equation}
Since $\H^{d-1}(\p B(0,\eta)) = d|B(0,1)|\eta^{d-1}$, it follows that
\begin{equation}
\begin{split}
&\int_{|x|=\eta}\varphi(x)\na\g_{loc}(x)\cdot\frac{x}{|x|}d\H^{d-1}(x)+ \int_{|x|=\eta}\varphi(x)\nabla\g(x)\cdot\frac{-x}{|x|}d\H^{d-1}(x) \\
&= \frac{1}{\H^{d-1}(\p B(0,\eta))}\int_{|x|=\eta}\varphi(x)d\H^{d-1}(x).
\end{split}
\end{equation}
Since $-\D\g(x) = -1$ for $|x|>0$ and $-\D\g_{loc}(x) = -1$ for $|x|\leq 1/4$, we also find that
\begin{equation}
- \int_{|x|<\eta}\varphi(x)\D\g_{loc}(x)dx- \int_{|x|\geq\eta}\varphi(x)\D\g(x)dx = -\int_{\T^d}\varphi(x)dx.
\end{equation}
Thus, we have shown that
\begin{equation}
\ipp{\varphi,-\D\g_\eta} = \ipp{\varphi,\sigma_{\p B(0,\eta)}-1}.
\end{equation}
Since $\varphi\in C^\infty(\T^d)$ was arbitrary, the conclusion of the lemma follows immediately. 
\end{proof}

Using the definition \eqref{eq:deta} of $\d_0^{(\eta)}$, we have the identity
\begin{align}
\label{eq:idgconv}
\g\ast(\d_0-\d_0^{(\eta)}) = \g\ast\d_0 - \g\ast(-\D\g_\eta + 1) = \g - (\d_0-1)\ast\g_\eta = \g-\g_\eta \eqqcolon \f_{\eta}.
\end{align}
where we use that $\g, \g_\eta$ both have mean zero. For later use, let us now analyze $\f_{\eta}$. The definition \eqref{eq:geta} of $\g_\eta$ implies that
\begin{equation}
\label{eq:idgdiff}
\f_\eta(x) = \begin{cases} 0, & |x|\geq\eta \\ \g_{\R^d}(x)-\g_{\R^d}(\eta) + c_{\g,\eta}, & {|x|<\eta}. \end{cases}
\end{equation}
We also have the gradient identity
\begin{equation}
\label{eq:idgddiff}
\nabla\f_\eta(x) = \nabla\g_{\R^d}(x)1_{\leq \eta}(x).
\end{equation}
We estimate the normalizing constant $c_{\g,\eta}$ with the next lemma.

\begin{lemma}
\label{lem:cg}
For all $0<\eta<1/4$, it holds that
\begin{equation}
|c_{\g,\eta}| \lesssim_d \eta^2.
\end{equation}
\end{lemma}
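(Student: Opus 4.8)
The plan is to use the zero-mean normalization of $\g$ to reduce $c_{\g,\eta}$ to an integral over the small ball $B(0,\eta)$, where $\tl\g_\eta$ differs from $\g$ by something explicit. Since $\int_{\T^d}\g\,dx=0$ and $\tl\g_\eta\equiv\g$ on $\T^d\setminus B(0,\eta)$, we have
\begin{equation}
c_{\g,\eta} = \int_{\T^d}\tl\g_\eta(x)\,dx = \int_{\T^d}\big(\tl\g_\eta(x)-\g(x)\big)\,dx = \int_{B(0,\eta)}\big(\tl\g_\eta(x)-\g(x)\big)\,dx.
\end{equation}
On $B(0,\eta)\subset\ol{B(0,1/4)}$ the hypothesis $\eta<1/4$ lets us invoke the decomposition \eqref{eq:gasy}, $\g=\g_{\R^d}+\g_{loc}$, and combining this with the definition of $\tl\g_\eta$ on $B(0,\eta)$ the smooth part $\g_{loc}$ cancels, leaving $\tl\g_\eta(x)-\g(x)=\g_{\R^d}(\eta)-\g_{\R^d}(x)$. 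Thus $c_{\g,\eta}=\int_{B(0,\eta)}\big(\g_{\R^d}(\eta)-\g_{\R^d}(x)\big)\,dx$.

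It then remains to evaluate this integral in polar coordinates, treating $d\ge 3$ and $d=2$ separately. For $d\ge 3$, with $\g_{\R^d}(x)=c_d|x|^{2-d}$, one computes $\int_{B(0,\eta)}|x|^{2-d}\,dx=\tfrac{|\p B(0,1)|}{2}\eta^2$ and $\int_{B(0,\eta)}\eta^{2-d}\,dx=|B(0,1)|\eta^2$, so $|c_{\g,\eta}|\lesssim_d\eta^2$. For $d=2$, with $\g_{\R^2}(x)=-\tfrac{1}{2\pi}\ln|x|$, the substitution $r=\eta t$ in the radial integral gives $c_{\g,\eta}=\eta^2\int_0^1 t\ln t\,dt=-\tfrac{1}{4}\eta^2$. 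In both cases the implied constant depends only on $d$, as claimed.

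There is no substantive obstacle: the two points needing care are (i) that $B(0,\eta)$ lies inside the region $\ol{B(0,1/4)}$ where \eqref{eq:gasy} is valid, which is exactly the hypothesis $\eta<1/4$, and (ii) the cancellation of $\g_{loc}$, which is what reduces the problem to an elementary radial integral. If one prefers a unified treatment, one can instead bound $|\g_{\R^d}(\eta)-\g_{\R^d}(x)|\lesssim_d\eta^{2-d}$ pointwise on $B(0,\eta)$ (read as $\tfrac{1}{2\pi}|\ln(|x|/\eta)|$ when $d=2$) and note this bound integrates to $\lesssim_d\eta^2$ over $B(0,\eta)$; but the explicit computation above is equally short.
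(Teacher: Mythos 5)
Your proof is correct and follows essentially the same route as the paper: both use the zero-mean normalization of $\g$ to reduce $c_{\g,\eta}$ to $\int_{B(0,\eta)}\paren*{\g_{\R^d}(\eta)-\g_{\R^d}(x)}dx$ (with $\g_{loc}$ cancelling thanks to \eqref{eq:gasy}) and then evaluate this radial integral explicitly in the cases $d=2$ and $d\geq 3$. Your computed values, including $c_{\g,\eta}=-\eta^2/4$ for $d=2$, agree with the paper's.
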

\begin{proof}
Since $\g$ has zero mean on $\T^d$, it follows that
\begin{align}
\label{eq:idcgeta}
c_{\g,\eta} &= \int_{|x|\geq\eta}\g(x)dx + \int_{|x|<\eta}\paren*{\g_{\R^d}(\eta) + \g_{loc}(x)}dx \nn\\
&=-\int_{|x|<\eta}\g(x)dx + \int_{|x|<\eta}\paren*{\g_{\R^d}(\eta)+\g_{loc}(x)}dx \nn\\
&=\int_{|x|<\eta}\paren*{\g_{\R^d}(\eta)-\g_{\R^d}(x)}dx \nn\\
&=\g_{\R^d}(\eta)|B(0,\eta)| - \int_{|x|<\eta}\g_{\R^d}(x)dx.
\end{align}
Let us compute the second term. First, suppose that $d=2$. Unpacking the definition of $\g_{\R^2}$ and using polar coordinates, we find that
\begin{align}
\int_{|x|<\eta} \g_{\R^2}(x)dx = -\int_0^\eta r\ln r dr = -\frac{\eta^2\ln\eta}{2} +\frac{\eta^2}{4}.
\end{align}
Next, suppose that $d\geq 3$. Proceeding similarly, we find that
\begin{align}
\int_{|x|<\eta} \g_{\R^d}(x)dx = c_d \H^{d-1}(\p B(0,1))\int_0^\eta r^{d-1}r^{2-d}dr = \frac{c_d \eta^2 \H^{d-1}(\p B(0,1))}{2}.
\end{align}
Since $\H^{d-1}(\p B(0,1)) = d|B(0,1)|$, it follows that
\begin{equation}
\label{eq:cgeta}
c_{\g,\eta} = \g_{\R^d}(\eta)|B(0,\eta)| - \int_{|x|<\eta}\g_{\R^d}(x)dx =
\begin{cases}
-\frac{\eta^2}{4}, & {d=2} \\ \frac{\eta^2(d^{-1}-2^{-1})}{d-2}, &{d\geq 3}.
\end{cases}
\end{equation}
\end{proof}

Next, we use the preceding computation in order to estimate the $L^p$ norms of $\f_\eta, \nabla\f_\eta$, for appropriate $p$.

\begin{lemma}
\label{lem:gLp}
For $d\geq 2$ and $1\leq p< d/(d-2)$, it holds that
\begin{equation}
\forall 0<\eta<1/4, \qquad \|\f_\eta\|_{L^p(\T^d)} \lesssim_{d,p}
\begin{cases}
\eta^{\frac{2}{p}}|\ln\eta|, & {d=2} \\
\eta^{2-\frac{d(p-1)}{p}}, & {d\geq 3};
\end{cases}
\end{equation}
and for $1\leq p<d/(d-1)$, it holds that
\begin{equation}
\|\nabla\f_\eta\|_{L^p(\T^d)} \lesssim_{d,p} \eta^{1-\frac{d(p-1)}{p}}.
\end{equation}
\end{lemma}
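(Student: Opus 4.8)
The plan is to reduce both estimates to one-dimensional radial integrals over the ball $B(0,\eta)$. By \eqref{eq:idgdiff} and \eqref{eq:idgddiff}, the functions $\f_\eta$ and $\nabla\f_\eta$ vanish identically on $\T^d\setminus B(0,\eta)$, so $\|\f_\eta\|_{L^p(\T^d)}=\|\f_\eta\|_{L^p(B(0,\eta))}$ and similarly for the gradient; the only inputs beyond elementary calculus are the estimate $|c_{\g,\eta}|\lesssim_d\eta^2$ from \cref{lem:cg} and, in dimension two, an elementary bound for $\int_0^\eta r|\ln r|^p\,dr$.

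For the gradient estimate, I would combine \eqref{eq:idgddiff} with the pointwise bound $|\nabla\g_{\R^d}(x)|\lesssim_d|x|^{1-d}$ for $x\neq0$, which is immediate from \eqref{eq:gRd}. Passing to polar coordinates gives $\|\nabla\f_\eta\|_{L^p(\T^d)}^p\lesssim_d\int_0^\eta r^{(1-d)p+d-1}\,dr$. The integrand is locally integrable at the origin exactly when $(1-d)p+d-1>-1$, i.e.\ $p<d/(d-1)$, and then the integral equals a dimensional constant times $\eta^{(1-d)p+d}$. Taking $p$-th roots yields $\|\nabla\f_\eta\|_{L^p}\lesssim_{d,p}\eta^{1-d+d/p}=\eta^{1-\frac{d(p-1)}{p}}$, as claimed.

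For the potential estimate, \eqref{eq:idgdiff} gives $\f_\eta(x)=\g_{\R^d}(x)-\g_{\R^d}(\eta)+c_{\g,\eta}$ on $B(0,\eta)$, so the triangle inequality gives $\|\f_\eta\|_{L^p(B(0,\eta))}\le\|\g_{\R^d}\|_{L^p(B(0,\eta))}+\big(|\g_{\R^d}(\eta)|+|c_{\g,\eta}|\big)|B(0,\eta)|^{1/p}$. When $d\ge3$, $|\g_{\R^d}(x)|=c_d|x|^{2-d}$, and the polar-coordinate computation gives $\|\g_{\R^d}\|_{L^p(B(0,\eta))}^p\sim_{d,p}\int_0^\eta r^{(2-d)p+d-1}\,dr$, finite exactly when $p<d/(d-2)$ and then comparable to $\eta^{(2-d)p+d}$; hence this term and $|\g_{\R^d}(\eta)||B(0,\eta)|^{1/p}$ are both of order $\eta^{2-d+d/p}=\eta^{2-\frac{d(p-1)}{p}}$, while $|c_{\g,\eta}||B(0,\eta)|^{1/p}\lesssim_d\eta^{2+d/p}$ is of strictly higher order in $\eta$ for $\eta<1/4$ and so is absorbed. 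When $d=2$, $|\g_{\R^2}(x)|=|\ln|x||/2\pi$ and $|\g_{\R^2}(\eta)|=|\ln\eta|/2\pi$; the substitution $r=\eta t$ together with the bound $|\ln(\eta t)|\le|\ln\eta|(1+|\ln t|)$, valid for $0<\eta<1/4$, gives $\int_0^\eta r|\ln r|^p\,dr\lesssim_p\eta^2|\ln\eta|^p$, whence $\|\g_{\R^2}\|_{L^p(B(0,\eta))}\lesssim_p\eta^{2/p}|\ln\eta|$; the remaining two contributions are of order $\eta^{2/p}|\ln\eta|$ and $\eta^{2+2/p}$ respectively, and summing yields the claim.

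All computations here are routine; the only steps requiring (minor) care are the logarithmic bookkeeping in dimension two — in particular verifying $\int_0^\eta r|\ln r|^p\,dr\lesssim_p\eta^2|\ln\eta|^p$ for $\eta$ bounded away from $1$ — and checking that the constant-term contributions $|c_{\g,\eta}||B(0,\eta)|^{1/p}$ sit at strictly higher powers of $\eta$ than the main terms, so that the stated exponents are indeed those appearing in the estimate.
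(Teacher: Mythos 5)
Your proof is correct and follows essentially the same route as the paper's: reduce to the ball $B(0,\eta)$ via the explicit formulas \eqref{eq:idgdiff}--\eqref{eq:idgddiff}, control the constant $c_{\g,\eta}$ by \cref{lem:cg}, and compute the remaining radial integrals in polar coordinates, with the integrability thresholds $p<d/(d-2)$ and $p<d/(d-1)$ appearing exactly as in the paper. The only cosmetic difference is that you split $\g_{\R^d}(x)-\g_{\R^d}(\eta)$ by the triangle inequality while the paper estimates the difference as a whole, which changes nothing in the stated bounds.
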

\begin{proof}
By the triangle inequality and \cref{lem:cg},
\begin{equation}
\|\f_\eta\|_{L^p} \lesssim_d \|\g_{\R^d}-\g_{\R^d}(\eta)\|_{L^p} + \eta^2.
\end{equation}
For $d=2$, we change to polar coordinates to obtain
\begin{align}
\|\g_{\R^2}-\g_{\R^2}(\eta)\|_{L^p} &\lesssim \paren*{\int_0^\eta r(-\ln r + \ln\eta)^p dr}^{1/p} \lesssim_p \eta^{2/p}|\ln \eta|.
\end{align}
Similarly, if $d\geq 3$, then
\begin{align}
\|\g_{\R^d}-\g_{\R^d}(\eta)\|_{L^p} &\lesssim_d \paren*{\int_0^\eta r^{d-1}(r^{2-d} - \eta^{2-d})^pdr}^{1/p} \lesssim_{p,d} \eta^{2-\frac{d(p-1)}{p}},
\end{align}
provided that $1\leq p<d/(d-2)$. After a little bookkeeping, we arrive at the first assertion. The second assertion follows from the identity \eqref{eq:idgddiff} and proceeding as before.
\end{proof} 

We also introduce the notation\footnote{The reader should note that our notation corresponds to $N^{-2}$ times the notation $H_{N,\ueta_N}^{\mu,\ux_N}$ used in \cite{Rosenzweig2020_PVMF}.}
\begin{equation}
H_{N,\ueta_N}^{\mu,\ux_N}(x) \coloneqq (\g\ast(\frac{1}{N}\sum_{i=1}^N \d_{x_i}^{(\eta_i)}-\mu))(x) = \Dm^{-2}(\frac{1}{N}\sum_{i=1}^N \d_{x_i}^{(\eta_i)}-\mu)(x)
\end{equation}
to denote the $\g$-potential of the difference $\frac{1}{N}\sum_{i=1}^N\d_{x_i}^{(\eta_i)} - \mu$. Observe from Plancherel's theorem that
\begin{equation}
\label{eq:gradH}
\int_{\T^d} |\nabla H_{N,\ueta_N}^{\mu,\ux_N}(x)|^2 dx = \int_{(\T^d)^2}\g(x-y)d(\frac{1}{N}\sum_{i=1}^N \d_{x_i}^{(\eta_i)} - \mu)^{\otimes 2}(x,y).
\end{equation}
In particular, since $\mu\in L^p(\T^d)$, for $p>d/2$, implies that $\mu$ has finite Coulomb energy and $\g\ast\mu$ is H\"older continuous, the left-hand side is also finite under this assumption.

\begin{prop}
\label{prop:MElb}
The exists a constant $C_d>0$, such that for any $\ux_N\in (\T^d)^N\setminus\D_N$, $\mu\in L^\infty(\T^d)$, and $0<\eta_1,\ldots,\eta_N < 1/8$, it holds that
\begin{equation}
\begin{split}
\frac{1}{N^2}\sum_{{1\leq i\neq j\leq N}}\paren*{\g_{\R^d}(x_i-x_j) - \g_{\R^d}(\eta_i)}_{+} &\leq \Fr_N(\ux_N,\mu) + \frac{C_d}{N^2}\sum_{j=1}^N \paren*{|\ln\eta_j|1_{d=2} + \eta_j^{2-d} 1_{d\geq 3}} \\
&\ph - \int_{\T^d} |\nabla H_{N,\ueta_N}^{\mu,\ux_N}(x)|^2 dx  + \frac{C_d(1+\|\mu\|_{L^\infty})}{N}\sum_{j=1}^N \eta_j^{2},
\end{split}
\end{equation}
where $(\cdot)_+ \coloneqq \max\{0,\cdot\}$.
\end{prop}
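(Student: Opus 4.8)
The plan is to expand the "smeared" Coulomb energy $\int_{\T^d}|\nabla H_{N,\ueta_N}^{\mu,\ux_N}|^2$ using \eqref{eq:gradH} and compare it term-by-term with the true (renormalized) energy $\Fr_N(\ux_N,\mu)$, controlling the difference by the self-energy of the smearing and by the cross-terms measuring how much the smeared field disagrees with the point-mass field. Concretely, by \eqref{eq:gradH} the left side of \eqref{eq:gradH} equals $\frac{1}{N^2}\sum_{i,j}\int\int\g(x-y)d\d_{x_i}^{(\eta_i)}d\d_{x_j}^{(\eta_j)} - \frac{2}{N}\sum_i\int\g(x-y)d\d_{x_i}^{(\eta_i)}d\mu(y) + \int\int\g\,d\mu^{\otimes2}$, while $\Fr_N(\ux_N,\mu)$ is the same expression with $\d_{x_i}^{(\eta_i)}$ replaced by $\d_{x_i}$ and the diagonal $i=j$ excised. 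So the first step is to write
$$\int_{\T^d}|\nabla H_{N,\ueta_N}^{\mu,\ux_N}|^2\, = \Fr_N(\ux_N,\mu) + \frac{1}{N^2}\sum_i\paren*{\g\ast\d_0^{(\eta_i)}}(0)\text{-type self-terms} + \text{(cross error terms)},$$
where I use the decomposition $\d_{x_i}^{(\eta_i)} = \d_{x_i} + (\d_{x_i}^{(\eta_i)}-\d_{x_i})$ and the key identity \eqref{eq:idgconv}, i.e. $\g\ast(\d_0-\d_0^{(\eta)}) = \f_\eta$.

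The second step is to isolate and bound each piece. (i) \emph{Diagonal self-energy}: $\frac{1}{N^2}\sum_i \iint\g\,d\d_{x_i}^{(\eta_i)}\otimes d\d_{x_i}^{(\eta_i)}$, which is $\frac{1}{N^2}\sum_i (\g\ast\sigma_{\p B(0,\eta_i)})(\text{at radius }\eta_i)$; using $\g = \g_{\R^d}+\g_{loc}$ this is $\g_{\R^d}(\eta_i)$ up to $O(1)$, contributing exactly the $|\ln\eta_i|1_{d=2} + \eta_i^{2-d}1_{d\geq3}$ terms after dividing by $N$. (ii) \emph{Off-diagonal particle-particle error}: for $i\neq j$, $\iint\g\,d\d_{x_i}^{(\eta_i)}d\d_{x_j}^{(\eta_j)} - \g(x_i-x_j) = -(\f_{\eta_i}\ast\d_{x_j}^{(\eta_j)})(x_i)\ -\ (\text{similar})$, and since $\f_\eta \leq 0$ is supported in $B(0,\eta)$ with $\f_\eta(x) = \g_{\R^d}(x)-\g_{\R^d}(\eta)+c_{\g,\eta}$, this difference is pointwise at least $\g(x_i-x_j)$ minus something nonnegative when $|x_i-x_j|$ is small — this is exactly where the truncated-at-$\eta_i$ potential $(\g_{\R^d}(x_i-x_j)-\g_{\R^d}(\eta_i))_+$ appears on the left, using $\g_{\R^d}$ decreasing and the support of $\f$. (iii) \emph{Particle-background cross error}: $\frac{1}{N}\sum_i (\f_{\eta_i}\ast\mu)(x_i)$, bounded by $\|\mu\|_{L^\infty}\|\f_{\eta_i}\|_{L^1} \lesssim \|\mu\|_{L^\infty}\eta_i^2$ via \cref{lem:gLp} (with $p=1$), which is the origin of the $\frac{(1+\|\mu\|_{L^\infty})}{N}\sum\eta_j^2$ term. (iv) The pure background term $\iint\g\,d\mu^{\otimes2}$ is unchanged and cancels.

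The main obstacle is that in the periodic setting $\g$ is not explicit, so the smeared Dirac mass $\d_0^{(\eta)}$ is the sphere measure $\sigma_{\p B(0,\eta)}$ (as just proved in the lemma above) rather than something one can integrate in closed form; all the identities must be routed through \eqref{eq:idgconv}, \eqref{eq:idgdiff}, \eqref{eq:idgddiff} and the decomposition $\g=\g_{\R^d}+\g_{loc}$, with $\g_{loc}\in C^\infty$ absorbed into the $O(1)$ errors. The delicate point is the lower bound in step (ii): one must verify that the renormalization correctly produces the \emph{positive part} $(\g_{\R^d}(x_i-x_j)-\g_{\R^d}(\eta_i))_+$, which follows because $\f_{\eta_i}$ is supported in $B(0,\eta_i)$ and $\iint\g\,d\d_{x_i}^{(\eta_i)}d\d_{x_j}^{(\eta_j)} \geq$ (its value with point masses) precisely by Jensen/Newton's theorem for the superharmonic-away-from-diagonal potential together with the monotonicity of $\g_{\R^d}$ — the restriction $\eta_j<1/8$ guarantees the relevant balls lie in the region where \eqref{eq:gasy} applies. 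Everything else is bookkeeping of the error scales already computed in \cref{lem:cg} and \cref{lem:gLp}.
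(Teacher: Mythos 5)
Your overall architecture is the right one, and it is essentially the argument the paper defers to (the proof of \cite[Proposition 3.3]{Serfaty2020}): expand $\int_{\T^d}|\nabla H_{N,\ueta_N}^{\mu,\ux_N}|^2$ via \eqref{eq:gradH}, compare term by term with $\Fr_N(\ux_N,\mu)$, absorb the smeared self-interactions into the $|\ln\eta_j|1_{d=2}+\eta_j^{2-d}1_{d\geq 3}$ errors, and bound the particle--background cross terms by $\|\mu\|_{L^\infty}\eta_j^2$-type quantities. (One small repair there: quoting \cref{lem:gLp} at $p=1$ in $d=2$ gives $\|\f_\eta\|_{L^1}\lesssim \eta^2|\ln\eta|$, which does not match the stated error; use instead the exact computation behind \cref{lem:cg}, which shows $\|\f_\eta\|_{L^1}\lesssim_d\eta^2$ in all dimensions.)

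However, the step you yourself flag as the delicate one is stated in the wrong direction, and as written it would fail. First, $\f_\eta=\g-\g_\eta$ is not $\leq 0$: by \eqref{eq:idgdiff} and \cref{lem:cg}, $\f_\eta\geq c_{\g,\eta}\geq -C_d\eta^2$, and in fact $\f_\eta(x)\geq (\g_{\R^d}(x)-\g_{\R^d}(\eta))_+-C_d\eta^2$, with $\f_\eta$ large and positive near the origin. Second, and decisively, the asserted inequality $\iint\g\,d\d_{x_i}^{(\eta_i)}d\d_{x_j}^{(\eta_j)}\geq \g(x_i-x_j)$ is backwards: since $\g_{\R^d}$ is superharmonic, averaging over the spheres \emph{decreases} the interaction, up to $O(\eta_i^2+\eta_j^2)$ corrections coming from $\g_{loc}$ and the unit background (where $\Delta\g=1$), and it is exactly this deficit that produces the left-hand side of the proposition. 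Concretely, \eqref{eq:idgconv} gives the exact identity $\g(x_i-x_j)-\iint\g\,d\d_{x_i}^{(\eta_i)}d\d_{x_j}^{(\eta_j)}=\f_{\eta_i}(x_i-x_j)+(\f_{\eta_j}\ast\d_{x_i}^{(\eta_i)})(x_j)$, and the needed lower bound $\geq(\g_{\R^d}(x_i-x_j)-\g_{\R^d}(\eta_i))_+-C_d(\eta_i^2+\eta_j^2)$ then follows from the two facts about $\f_\eta$ just recalled. With the inequality in the direction you wrote, the quantity $\int_{\T^d}|\nabla H_{N,\ueta_N}^{\mu,\ux_N}|^2-\Fr_N(\ux_N,\mu)$ would be bounded from below rather than above, and the positive-part sum could not be brought to the left-hand side; note also that your own appeal to Newton's theorem/superharmonicity is inconsistent with the inequality you state, since the mean value over a sphere of a superharmonic function lies below its value at the center. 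Once the sign of $\f_\eta$ and the direction of this comparison are corrected, your outline does reproduce the cited proof, including the bookkeeping that yields the truncation at $\eta_i$ for each ordered pair $(i,j)$.
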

\begin{proof}
See the proof of \cite[Proposition 3.3]{Serfaty2020}.
\end{proof}

\begin{remark}
\label{rem:MEnneg}
Since the left-hand side of the inequality in \cref{prop:MElb} is nonnegative, the proposition shows that there is a constant $C_d>0$ such that
\begin{equation}
\Fr_N(\ux_N,\mu) + \frac{C_d}{N^2}\sum_{j=1}^N \paren*{|\ln\eta_j|1_{d=2} + \eta_j^{2-d} 1_{d\geq 3}}  + \frac{C_d(1+\|\mu\|_{L^\infty})}{N}\sum_{j=1}^N \eta_j^{2} \geq 0
\end{equation}
for all choices $0<\eta_1,\ldots,\eta_N < 1/8$. Consequently, up to the additive error detailed above, $\Fr_N$ and $|\Fr_N|$ are comparable.
\end{remark}

A quick corollary of \cref{prop:MElb} is that if we choose the smearing/truncation scales $\ueta_N$ to be comparable to the nearest-neighbor distances of each particle, we can use the modulated energy to control--up to a small additiver error--the $\dot{H}^1$ norm of the smeared potential $H_{N,\ueta_N}^{\mu,\ux_N}$, as well as the self-interaction contribution to the energy.
\begin{cor}
\label{cor:MEct}
Let $d\geq 2$. Then there exists a constant $C_d>0$, such that for any $\ux_N\in (\T^d)^N\setminus\D_N$, $\mu\in L^\infty(\T^d)$, $0<\ep<1/8$, it holds that if
\begin{equation}
\label{eq:ridef}
r_{i,\ep} \coloneqq \min\{\frac{1}{4}\min_{1\leq j\leq N : j\neq i} |x_i-x_j|,\ep\}, \qquad \ur_{N,\ep} \coloneqq (r_{1,\ep},\ldots,r_{N,\ep}),
\end{equation}
then
\begin{equation}
\label{eq:MEHlb}
\begin{split}
\int_{\T^d} |\nabla H_{N,\ur_{N,\ep}}^{\mu,\ux_N}(x)|^2 dx \leq \Fr_{N}(\ux_N,\mu) + \frac{C_d}{N}\paren*{|\ln\ep|1_{d=2} + \ep^{2-d}1_{d\geq 3}} + C_{d}(1+\|\mu\|_{L^\infty})\ep^2,
\end{split}
\end{equation}
\begin{equation}
\label{eq:MEsi}
\begin{split}
\frac{1}{N^2}\sum_{i=1}^N \g_{\R^d}(r_{i,\ep}) \leq \Fr_{N}(\ux_N,\mu) + \frac{C_d}{N}\paren*{|\ln\ep|1_{d=2} + \ep^{2-d}1_{d\geq 3}} + C_{d}(1+\|\mu\|_{L^\infty})\ep^2.
\end{split}
\end{equation}
\end{cor}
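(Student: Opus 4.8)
The plan is to deduce both estimates from \cref{prop:MElb} by exploiting the near-minimality of the truncation scales $r_{i,\ep}$. Two elementary observations will be used repeatedly. First, since $\ux_N\notin\D_N$ and $\ep<1/8$, every scale $0<r_{i,\ep}\leq\ep<1/8$ is admissible in \cref{prop:MElb}. Second, by construction $r_{i,\ep}\leq\tfrac14|x_i-x_j|<|x_i-x_j|$ for each $j\neq i$, so radial monotonicity of $\g_{\R^d}$ gives $\g_{\R^d}(r_{i,\ep})\geq\g_{\R^d}(x_i-x_j)$, whence the summand $\paren*{\g_{\R^d}(x_i-x_j)-\g_{\R^d}(r_{i,\ep})}_+$ vanishes for all $i\neq j$.

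Granting \eqref{eq:MEsi}, the bound \eqref{eq:MEHlb} is short: apply \cref{prop:MElb} with $\eta_i\equiv r_{i,\ep}$, so that by the second observation its left-hand side is zero, and discard it; using $r_{j,\ep}\leq\ep$ this leaves
\[
\int_{\T^d}|\nabla H_{N,\ur_{N,\ep}}^{\mu,\ux_N}|^2\,dx\leq\Fr_N(\ux_N,\mu)+\frac{C_d}{N^2}\sum_{j=1}^N\paren*{|\ln r_{j,\ep}|1_{d=2}+r_{j,\ep}^{2-d}1_{d\geq3}}+C_d(1+\|\mu\|_{L^\infty})\ep^2.
\]
Since $|\ln r|1_{d=2}+r^{2-d}1_{d\geq3}\sim_d\g_{\R^d}(r)$ for $0<r<1/8$, the residual sum is comparable to $\tfrac1{N^2}\sum_j\g_{\R^d}(r_{j,\ep})$, which is exactly the quantity controlled by \eqref{eq:MEsi}; absorbing it into the right-hand side (enlarging $C_d$) gives \eqref{eq:MEHlb}.

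It therefore remains to prove \eqref{eq:MEsi}, and here I would instead apply \cref{prop:MElb} with the \emph{uniform} scale $\eta_i\equiv\ep$. Since $\int_{\T^d}|\nabla H^{\mu,\ux_N}|^2\,dx\geq0$ and all scales equal $\ep$, discarding the $\dot H^1$-term yields
\[
\frac1{N^2}\sum_{i\neq j}\paren*{\g_{\R^d}(x_i-x_j)-\g_{\R^d}(\ep)}_+\leq\Fr_N(\ux_N,\mu)+\frac{C_d}{N}\paren*{|\ln\ep|1_{d=2}+\ep^{2-d}1_{d\geq3}}+C_d(1+\|\mu\|_{L^\infty})\ep^2.
\]
To finish I would dominate $\tfrac1{N^2}\sum_i\g_{\R^d}(r_{i,\ep})$ by the left-hand side above plus an admissible error: for each $i$ pick a nearest neighbour $j(i)$ (so $|x_i-x_{j(i)}|=4r_{i,\ep}$ when $r_{i,\ep}<\ep$, and $\geq4\ep$ otherwise) and use the scale comparison $\g_{\R^d}(r_{i,\ep})\leq C_d\paren*{\g_{\R^d}(x_i-x_{j(i)})-\g_{\R^d}(\ep)}_++C_d\,\g_{\R^d}(\ep)$, valid because $\g_{\R^2}(r)-\g_{\R^2}(4r)$ is a universal constant while $\g_{\R^d}(r)=4^{d-2}\g_{\R^d}(4r)$ for $d\geq3$, and because $\g_{\R^d}(\ep)\lesssim_d|\ln\ep|1_{d=2}+\ep^{2-d}1_{d\geq3}$. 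Summing over $i$, observing that the ordered pairs $(i,j(i))$ are distinct and all summands of $\sum_{k\neq l}\paren*{\g_{\R^d}(x_k-x_l)-\g_{\R^d}(\ep)}_+$ are nonnegative, so that $\sum_i\paren*{\g_{\R^d}(x_i-x_{j(i)})-\g_{\R^d}(\ep)}_+\leq\sum_{k\neq l}\paren*{\g_{\R^d}(x_k-x_l)-\g_{\R^d}(\ep)}_+$, and inserting the displayed estimate establishes \eqref{eq:MEsi}.

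The main obstacle is the self-interaction sum $\tfrac1{N^2}\sum_j\g_{\R^d}(r_{j,\ep})$, which diverges whenever two particles approach one another; the crux of the argument is that running \cref{prop:MElb} with a uniform smearing scale converts a cluster of nearly colliding particles into a large positive contribution to $\Fr_N(\ux_N,\mu)$, so that the two blow-ups cancel and only the stated errors survive. (The remaining work—verifying the exact forms of the error constants and, in the periodic setting, that the scale comparisons only cost dimensional constants—is routine.)
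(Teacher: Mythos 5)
Your argument is the standard one behind the references the paper cites for this corollary (the paper itself only points to \cite[Corollary 3.6]{Rosenzweig2020_PVMF} and \cite[Corollary 3.6]{Rosenzweig2020_CouMF}), and the individual steps are sound: the admissibility of the scales $r_{i,\ep}<1/8$, the vanishing of the positive-part sum in \cref{prop:MElb} when $\eta_i=r_{i,\ep}$, the uniform-scale application with $\eta_i\equiv\ep$ after discarding the nonnegative $\dot H^1$ term, the comparison $\g_{\R^d}(r_{i,\ep})\leq C_d\paren*{\g_{\R^d}(x_i-x_{j(i)})-\g_{\R^d}(\ep)}_++C_d\,\g_{\R^d}(\ep)$ (using $\ep<1/8$ to absorb the additive $\tfrac{\ln 4}{2\pi}$ when $d=2$), and the injectivity of $i\mapsto(i,j(i))$ among ordered pairs together with nonnegativity of the summands.

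The one discrepancy is the coefficient of $\Fr_N(\ux_N,\mu)$: your chain produces $C_d\,\Fr_N(\ux_N,\mu)$ on the right-hand side of \eqref{eq:MEsi} (a factor of order $4^{d-2}$ entering through $\g_{\R^d}(r)=4^{d-2}\g_{\R^d}(4r)$ for $d\geq 3$), and then a further enlarged constant in \eqref{eq:MEHlb}, whereas the statement pins this coefficient to $1$; ``enlarging $C_d$'' absorbs the additive errors but not this factor. You should not try to remove it, because with configuration-independent additive errors the coefficient-$1$ form is too strong: for a single pair at distance $\delta\to 0$ with everything else fixed, $\Fr_N\approx 2\g_{\R^d}(\delta)N^{-2}$ up to bounded terms, while $N^{-2}\sum_i\g_{\R^d}(r_{i,\ep})\approx 2\cdot 4^{d-2}\g_{\R^d}(\delta)N^{-2}$ and $\|\nabla H_{N,\ur_{N,\ep}}^{\mu,\ux_N}\|_{L^2}^2\approx 2(4^{d-2}+1)\g_{\R^d}(\delta)N^{-2}$, so \eqref{eq:MEHlb} with coefficient $1$ fails for all $d\geq 2$ and \eqref{eq:MEsi} fails for $d\geq 3$ in this regime; a dimensional constant in front of $\Fr_N$ (equivalently, by \cref{rem:MEnneg}, in front of $|\Fr_N|$) is unavoidable. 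This costs nothing downstream: in the proof of \cref{prop:com} the corollary is only invoked in the form $\lesssim_d|\Fr_N(\ux_N,\mu)|+{}$errors, which is exactly what your argument delivers, so you should simply record the conclusion with $C_d\Fr_N$ (or $C_d|\Fr_N|$) and note that all later uses go through unchanged.
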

\begin{proof}
For the proof of estimates \eqref{eq:MEHlb} and \eqref{eq:MEsi}, see \cite[Corollary 3.6]{Rosenzweig2020_PVMF} for the case $d=2$ and \cite[Corollary 3.6]{Rosenzweig2020_CouMF} for the case $d\geq 3$.
\end{proof}

\begin{remark}
\cref{cor:MEct} is a simple generalization of \cite[Corollary 3.4]{Serfaty2020}, where $\ep$ is chosen to be $N^{-1/d}$, which is the typical interparticle distance. Note that the choice $\ep=N^{-1/d}$ is such that both error terms in \eqref{eq:MEHlb} and \eqref{eq:MEsi} are of the same order in $N$ (up to a $\ln N$ factor if $d=2$). The reason we leave $\ep$ unspecified here and throughout the article until the very end is that doing so makes transparent the significance of the scale $N^{-1/d}$ as the optimal choice. Also, if $d=2$, one can exploit the fact that $\ep$ can be chosen to be $N^{-\beta}$ for arbitrarily large fixed $\beta$, at the cost of making the constant $C_d$ larger. This trick has the benefit of avoiding the more complicated estimate of \cref{prop:com} below, a point on which we comment more at the beginning of \cref{ssec:MEcomm}.
\end{remark}

\subsection{Control of Sobolev norms}
\label{ssec:ME_Sob}
Although $\Fr_N(\cdot,\cdot)$ is not a nonnegative quantity, as noted in \cref{rem:MEnneg}, it nevertheless controls Sobolev norms up to small error terms. The following proposition shows that the difference $\frac{1}{N}\sum_{i=1}^N \d_{x_i} - \mu$ belongs to the dual space $(W^{1,\infty}(\T^d))^*$, which by Sobolev embedding implies that it belongs to $H^s(\T^d)$ for any $s<-\frac{d+2}{2}$.

\begin{prop}
\label{prop:MESob}
For $d\geq 2$, there exists a constant $C_d>0$, such that the following holds: for any $\varphi\in W^{1,\infty}(\T^d)$, $\mu\in L^\infty(\T^d)$, and $\ux_N\in(\T^d)^N\setminus\D_N$,
\begin{equation}
\begin{split}
\left|\int_{\T^d}\varphi(x)d(\frac{1}{N}\sum_{i=1}^N\d_{x_i}-\mu)(x)\right| &\lesssim_d \ep\|\nabla\varphi\|_{L^\infty} + \|\nabla\varphi\|_{L^2}\Bigg(\Fr_N(\ux_N,\mu) + \frac{C_d}{N}\Big(|\ln\ep|1_{d=2} + \ep^{2-d}1_{d\geq 3}\Big) \\
&\ph + C_d(1+\|\mu\|_{L^\infty})\ep^2 \Bigg)^{1/2}
\end{split}
\end{equation}
for all choices $0<\ep<1/8$. 
\end{prop}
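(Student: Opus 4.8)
The plan is to replace the empirical measure $\mu_N \coloneqq \frac1N\sum_{i=1}^N\d_{x_i}$ by its smeared version at the truncated nearest-neighbour scales $\ur_{N,\ep}$ of \eqref{eq:ridef}, for which \cref{cor:MEct} already controls the Dirichlet energy of the associated potential by $\Fr_N(\ux_N,\mu)$, and to absorb the cost of smearing into the $\ep\|\na\varphi\|_{L^\infty}$ term. Writing $H \coloneqq H_{N,\ur_{N,\ep}}^{\mu,\ux_N}$ and recalling from \eqref{eq:deta} (together with translation invariance) that $\d_{x_i}^{(r_{i,\ep})}$ is the uniform probability measure on the sphere $\p B(x_i,r_{i,\ep})$, I would decompose
\begin{equation*}
\int_{\T^d}\varphi\, d(\mu_N-\mu) = \frac1N\sum_{i=1}^N\paren*{\varphi(x_i) - \int_{\T^d}\varphi\, d\d_{x_i}^{(r_{i,\ep})}} + \int_{\T^d}\varphi\, d\paren*{\tfrac1N\sum_{i=1}^N\d_{x_i}^{(r_{i,\ep})}-\mu}.
\end{equation*}

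For the first sum, since $\d_{x_i}^{(r_{i,\ep})}$ is a probability measure supported in $\ol{B(x_i,r_{i,\ep})}$ and $r_{i,\ep}\le\ep$, the mean value inequality gives $\abs*{\varphi(x_i) - \int_{\T^d}\varphi\, d\d_{x_i}^{(r_{i,\ep})}}\le r_{i,\ep}\|\na\varphi\|_{L^\infty}\le\ep\|\na\varphi\|_{L^\infty}$, so the whole sum is bounded by $\ep\|\na\varphi\|_{L^\infty}$. For the second term, I would use that $-\D H = \frac1N\sum_{i=1}^N\d_{x_i}^{(r_{i,\ep})}-\mu$ in $\Dc'(\T^d)$ (here it is implicit, as in the definition of $\Fr_N$, that $\mu$ is a probability density, so the constant $1$ in $\g\ast(\cdots)$ cancels); since the surface measure of a sphere lies in $H^{-1}(\T^d)$ and $\mu\in L^\infty\subset H^{-1}$, we have $H\in H^1(\T^d)$, and integrating by parts against $\varphi\in W^{1,\infty}(\T^d)\subset H^1(\T^d)$,
\begin{equation*}
\abs*{\int_{\T^d}\varphi\, d\paren*{\tfrac1N\sum_{i=1}^N\d_{x_i}^{(r_{i,\ep})}-\mu}} = \abs*{\int_{\T^d}\na\varphi\cdot\na H\,dx} \le \|\na\varphi\|_{L^2}\,\|\na H\|_{L^2}.
\end{equation*}
Bounding $\|\na H\|_{L^2}^2 = \int_{\T^d}\abs*{\na H_{N,\ur_{N,\ep}}^{\mu,\ux_N}}^2$ by \eqref{eq:MEHlb} of \cref{cor:MEct} and adding the two pieces yields the claimed inequality.

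The argument is short given \cref{cor:MEct}; the only points needing a little care are the justification of the distributional integration by parts — that $H\in H^1$ and that the $H^{-1}$–$H^1$ pairing coincides with integration of the continuous function $\varphi$ against the finite measure $\frac1N\sum_i\d_{x_i}^{(r_{i,\ep})}-\mu$ — and keeping track that $\mu$ integrates to one so that $-\D H$ is exactly that measure. I do not expect a genuine obstacle here; this proposition is essentially a repackaging of \cref{cor:MEct} as a statement about negative-order Sobolev norms. Optimizing over $\ep$ afterwards, the choice $\ep=N^{-1/d}$ balances the two error contributions and gives the dual $W^{1,\infty}$ (hence $H^s$ for $s<-\tfrac{d+2}{2}$) bound advertised just before the proposition.
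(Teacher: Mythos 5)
Your argument is correct and is essentially the same as the one the paper invokes (the proof of \cite[Proposition 3.6]{Serfaty2020}): smear each Dirac at the scale $r_{i,\ep}$, absorb the smearing error via the mean value theorem into $\ep\|\nabla\varphi\|_{L^\infty}$, and bound the smeared term by Cauchy--Schwarz against $\nabla H_{N,\ur_{N,\ep}}^{\mu,\ux_N}$ together with \eqref{eq:MEHlb}. Your remark that one implicitly needs $\int_{\T^d}\mu=1$ (so that $-\D H$ is exactly the signed measure, constants being invisible to $\Fr_N$) is a correct reading of the hypothesis as it is used in the paper, where $\mu=1+\vep^2\Uu$ with $\Uu$ mean-free.
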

\begin{proof}
See the proof of \cite[Proposition 3.6]{Serfaty2020}.
\end{proof}

\subsection{Commutator estimate}
\label{ssec:MEcomm}
As alluded to in \cref{ssec:introPf} of the introduction, a key ingredient in the proof of \cref{thm:main} is essentially an order-1 smoothing estimate for the commutator
\begin{equation}
\comm{v}{\nabla\Dm^{-2}}(\frac{1}{N}\sum_{i=1}^N\d_{x_i}-\mu) \coloneqq v\cdot\nabla\Dm^{-2}(\frac{1}{N}\sum_{i=1}^N\d_{x_i}-\mu) - \nabla\Dm^{-2}\cdot(v(\frac{1}{N}\sum_{i=1}^N\d_{x_i}-\mu)),
\end{equation}
where $v$ is a sufficiently smooth vector field, $\nabla\Dm^{-2}$ is the vector valued Fourier multplier with symbol $\frac{i\xi}{|\xi|^2}$, and $\mu$ is a measure with $L^\infty$ density. More precisely, we need to estimate expressions of the form
\begin{equation}
\label{eq:rcomm}
\int_{(\T^d)^2\setminus\D_2} \nabla\g(x-y)\cdot(v(x)-v(y))d(\frac{1}{N}\sum_{i=1}^N \d_{x_i}-\mu)^{\otimes 2}(x,y)
\end{equation}
in terms of the quantity $\Fr_N(\ux_N,\mu)$ and some small $N$-dependent error terms. Estimates of these kind were originally proved by Serfaty \cite[Proposition 1.1]{Serfaty2020} using the formalism of the stress-energy tensor in the setting of $\R^d$, but the proof easily adapts to setting of $\T^d$. The author later made the commutator structure transparent in \cite[Proposition 4.1]{Rosenzweig2020_SPVMF} and obtained the (believed) sharp $N$-dependence for the error terms. This latter result would suffice in dimension $d=2$, but for $d\geq 3$, it would still yield a sub-optimal range for $\theta$ compared to \eqref{eq:thrng}, but still better than the range \eqref{eq:th_res} of Han-Kwan and Iacobelli \cite{HkI2020}. To obtain the full range \eqref{eq:thrng} in all dimensions $d\geq 2$, we instead use a recent result of Serfaty \cite{Serfaty2021} for the ``renormalized commutator'' \eqref{eq:rcomm}, which we have reproduced below in the form of \cref{prop:com}--specializing to the notation of our setting.

\begin{prop}
\label{prop:com}
Let $d\geq 2$. There exists a constant $C_d>0$ such that for any Lipschitz vector field $v:\T^d\rightarrow\R^d$, $\mu \in L^\infty(\T^d)$, and vector $\ux_N\in (\T^d)^N \setminus\D_N$, we have the estimate
\begin{equation}
\label{eq:com}
\begin{split}
&\left|\int_{(\T^d)^2\setminus\D_2} (v(x)-v(y))\cdot\nabla\g(x-y)d(\frac{1}{N}\sum_{i=1}^N\d_{x_i}-\mu)^{\otimes 2}(x,y)\right|\\
&\leq C_d \|\nabla v\|_{L^\infty}\Bigg(|\Fr_N(\ux_N,\mu)|+\frac{|\ln \ep|}{N}1_{d=2} + \frac{1}{N\ep^{d-2}}1_{d\geq 3} + (1+\|\mu\|_{L^\infty})\ep^2\Bigg),
\end{split}
\end{equation}
for all choices $0<\ep<1/8$.
\end{prop}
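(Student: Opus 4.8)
The plan is to reduce the estimate \eqref{eq:com} to a statement about the smeared potential $H_{N,\ur_{N,\ep}}^{\mu,\ux_N}$ via Serfaty's stress-energy tensor formalism, and then to exploit the order-$1$ smoothing gained by the truncation at the nearest-neighbor scale. First I would choose the truncation parameters to be $r_{i,\ep}$ as in \eqref{eq:ridef}, i.e. comparable to $\ep$ or to a quarter of the distance from $x_i$ to its nearest neighbor, whichever is smaller; this is the choice for which \cref{cor:MEct} applies. The point of the renormalization (excision of $\D_2$) is that the left-hand side of \eqref{eq:com} differs from the analogous integral against the \emph{smeared} empirical measure $\frac{1}{N}\sum_i \d_{x_i}^{(r_{i,\ep})}$ only by boundary-layer corrections supported on the balls $B(x_i, r_{i,\ep})$, and by \eqref{eq:idgdiff}--\eqref{eq:idgddiff} together with \cref{lem:gLp} these corrections are quantitatively small; the Lipschitz bound on $v$ is exactly what lets one write $v(x)-v(y)$ as $O(\|\nabla v\|_{L^\infty}|x-y|)$ and absorb the gradient singularity of $\nabla\g_{\R^d}$ against a factor of $|x-y|$ on each smearing ball.

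Next I would rewrite the smeared version as an integral involving the stress-energy tensor of $H \coloneqq H_{N,\ur_{N,\ep}}^{\mu,\ux_N}$. Writing $\nabla\g = \nabla\Dm^{-2}(\d_0 - 1)$ and using the symmetry of the integrand, the smeared quantity equals (up to the lower-order corrections already handled)
\begin{equation}
\int_{\T^d} \paren*{(\nabla v) \nabla H \cdot \nabla H - \tfrac{1}{2}(\nabla\cdot v)|\nabla H|^2}\, dx + (\text{error}),
\end{equation}
an identity obtained by integrating by parts and regrouping, exactly as in \cite[Proposition 1.1]{Serfaty2020}. Bounding the main term by $\|\nabla v\|_{L^\infty}\int_{\T^d}|\nabla H|^2\,dx$ and invoking \eqref{eq:MEHlb} of \cref{cor:MEct} produces the $|\Fr_N(\ux_N,\mu)|$ term (recall from \cref{rem:MEnneg} that $\Fr_N$ and $|\Fr_N|$ are comparable up to precisely the displayed additive errors) together with the error terms $\frac{|\ln\ep|}{N}1_{d=2} + \frac{1}{N\ep^{d-2}}1_{d\geq 3} + (1+\|\mu\|_{L^\infty})\ep^2$. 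All the truncation corrections from the first step must be checked to be dominated by these same terms; this is where \cref{lem:gLp} and the self-interaction bound \eqref{eq:MEsi} get used, and where the sharp $N$-dependence enters. On $\T^d$ the only wrinkle relative to $\R^d$ is that $\g$ is not explicit near the origin, but \eqref{eq:gasy} reduces every local computation to $\g_{\R^d}$ plus the smooth remainder $\g_{loc}$, and the latter contributes only harmless $O(\|\nabla v\|_{L^\infty})$ terms by the argument already used in \cref{lem:gnab}.

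I expect the main obstacle to be the bookkeeping of the near-diagonal error terms: one must show that every correction arising from (i) adding back the diagonal, (ii) replacing Dirac masses by their smeared versions $\sigma_{\p B(x_i, r_{i,\ep})}$, and (iii) the discrepancy between $\g$ and $\g_{\R^d}$, is bounded by the right-hand side of \eqref{eq:com} \emph{with the believed-optimal powers of $N$ and $\ep$}, rather than the weaker powers that a cruder smearing analysis would give. Getting the sharp $\ep^{2-d}/N$ (versus, say, $\ep^{-d}/N$) requires using that the smearing happens at the nearest-neighbor scale $r_{i,\ep}$ and combining the pointwise bound on $\nabla\f_{r_{i,\ep}}$ with the fact, encoded in \eqref{eq:MEsi}, that the number of particles forced to have very small $r_{i,\ep}$ is itself controlled by $\Fr_N$. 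Since \cref{prop:com} is quoted from \cite[Proposition 4.3]{Serfaty2021}, I would present the proof as a reduction to that reference, indicating the periodic-domain modifications via \eqref{eq:gasy} and \cref{lem:gLp}, much as the surrounding results in this section are handled.
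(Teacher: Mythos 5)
Your frame for the leading term matches the paper: smear at the nearest-neighbor scales $r_{i,\ep}$, rewrite the smeared double integral via the stress-energy tensor, bound it by $\|\nabla v\|_{L^\infty}\|\nabla H_{N,\ur_{N,\ep}}^{\mu,\ux_N}\|_{L^2}^2$, and invoke \cref{cor:MEct}. The gap is in the smearing corrections, which is precisely where the sharp $N$-dependence is won or lost. When you replace $\d_{x_j}$ by $\d_{x_j}^{(\eta_j)}$, the resulting cross term splits into (a) a piece where the kernel difference collapses to $\nabla\f_{\eta_j}(\cdot-x_j)$ tested against the background $\mu$ — this is the piece your ``absorb the singularity of $\nabla\g_{\R^d}$ against a factor $|x-y|$'' argument and \cref{lem:gLp} do handle, and (b) a piece of the form $\int_{\T^d}\nabla H_{N,\ueta_N,j}^{\mu,\ux_N}(y)\cdot\paren*{v(y)-v(x_j)}\,d\d_{x_j}^{(\eta_j)}(y)$, i.e. the field generated by the \emph{other} particles and the background evaluated on particle $j$'s smearing sphere. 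Neither the pointwise bound on $\nabla\f_{r_{i,\ep}}$ nor the self-interaction control \eqref{eq:MEsi} touches this term; treating it crudely is exactly what produces the weaker additive errors of \cite[Proposition 1.1]{Serfaty2020}, i.e. the source of the restriction \eqref{eq:th_res} that the proposition is designed to remove. The paper's proof isolates this term through the decomposition into $\Te_1,\Te_2,\Te_3$, uses that $\d_{x_j}^{(\eta_j)}$ is a probability measure centered at $x_j$ (so the Lipschitz bound yields a factor $\eta_j$), and then crucially applies \cref{lem:HLinf} — the local $L^\infty$-by-local-$L^2$ estimate for $\nabla H_{N,\ueta_N,j}^{\mu,\ux_N}$ on $B(x_j,\eta_j)$, quoted from \cite[Lemma A.2]{Serfaty2021} and resting on the near-harmonicity of the potential away from the sources — followed by Cauchy--Schwarz in $j$ and \cref{cor:MEct}. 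This is the ``previously unused cancellation'' that yields the sharp errors $\frac{|\ln\ep|}{N}1_{d=2}+\frac{1}{N\ep^{d-2}}1_{d\geq 3}+(1+\|\mu\|_{L^\infty})\ep^2$; your sketch never identifies it, and the bookkeeping you propose cannot reach it.

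Your fallback of citing \cite[Proposition 4.3]{Serfaty2021} and indicating the periodic modifications is legitimate as far as the truth of the statement goes (the paper itself notes the result follows from Proposition 4.3 and Remark 1.2 there), but as a proof it outsources exactly the hard step; the paper instead gives a shorter self-contained argument that bypasses the variation-of-energy proof of that reference, with \cref{lem:HLinf} as the only imported ingredient.
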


Optimizing the choice of $\ep$ by taking $\ep=N^{-1/d}$, this result gives a sharper $N$-dependence for the additive error terms in the right-hand side of \eqref{eq:com} compared to the aforementioned works \cite{Serfaty2020, Rosenzweig2020_SPVMF}. The proof is much more involved, though, and relies on a previously unsused cancellation in the difference $\frac{1}{N}\sum_{i=1}^N\d_{x_i}-\mu$. The exact statement of \cref{prop:com} is not contained in \cite{Serfaty2021}, but it follows from Proposition 4.3 and Remark 1.2 in that work. We give below a new, shorter proof that bypasses the variation of energy argument used in that work. To this end, we first record a lemma that will be an important ingredient in the proof.

\begin{lemma}
\label{lem:HLinf}
For $d\geq 2$, let $\ux_N\in (\T^d)^N\setminus\D_N$ and $\mu\in L^\infty(\T^d)$. For $0< \eta_i\leq r_{i,\ep}$, define the function
\begin{equation}
H_{N,\ueta_N,i}^{\mu,\ux_N} \coloneqq \g\ast(\frac{1}{N}\sum_{{1\leq j\leq N}\atop {j\neq i}}\d_{x_j}^{(\eta_j)} - \mu),
\end{equation}
where we use the convention that $\d_{x_j}^{(0)} = \d_{x_j}$. Then
\begin{equation}
\|\nabla H_{N,\ueta_N,i}^{\mu,\ux_N}\|_{L^\infty(B(x_i,\eta_i))} \lesssim_{d} \frac{1}{\eta_i^{d/2}}\paren*{\int_{B(x_i,2\eta_i)}  |\nabla H_{N,\ueta_N,i}^{\mu,\ux_N}(x)|^2 dx}^{1/2} + \|\mu\|_{L^\infty}\eta_i.
\end{equation}
\end{lemma}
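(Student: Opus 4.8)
The plan is to decompose the function $H_{N,\ueta_N,i}^{\mu,\ux_N}$ on the ball $B(x_i,\eta_i)$ into a part that is harmonic there plus a lower-order contribution coming from the background $\mu$, and then exploit interior estimates for harmonic functions. Concretely, since $\eta_i \leq r_{i,\ep}$, by the definition \eqref{eq:ridef} of $r_{i,\ep}$ none of the smeared masses $\d_{x_j}^{(\eta_j)}$ with $j\neq i$ charge the ball $B(x_i,2\eta_i)$ (the nearest neighbor is at distance $\geq 4\eta_i$, and each $\eta_j \leq r_{j,\ep}$ means the smearing sphere $\p B(x_j,\eta_j)$ stays well away from $B(x_i,2\eta_i)$; one should check the constant carefully, but the factor $\tfrac14$ in \eqref{eq:ridef} gives the needed room). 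Hence on $B(x_i,2\eta_i)$ we have $-\D H_{N,\ueta_N,i}^{\mu,\ux_N} = -\mu + c$ for the appropriate constant (recall $\g$ solves $-\D\g = \d_0 - 1$), so $H_{N,\ueta_N,i}^{\mu,\ux_N}$ differs from a harmonic function on $B(x_i,2\eta_i)$ by a solution of a Poisson equation with $L^\infty$ right-hand side of size $\lesssim 1+\|\mu\|_{L^\infty}$.

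From here I would argue as follows. Write $H = h + w$ on $B(x_i,2\eta_i)$, where $h$ is harmonic and $w$ solves $-\D w = -\mu + c$ on $B(x_i,2\eta_i)$ with, say, zero boundary data. By standard elliptic estimates (scaling the Newtonian-potential bound), $\|\nabla w\|_{L^\infty(B(x_i,2\eta_i))} \lesssim_d (1+\|\mu\|_{L^\infty})\eta_i$, which produces the second term on the right-hand side of the claimed inequality. For the harmonic part, interior gradient estimates give, for $x\in B(x_i,\eta_i)$,
\begin{equation*}
|\nabla h(x)| \lesssim_d \frac{1}{\eta_i}\paren*{\frac{1}{|B(x_i,2\eta_i)|}\int_{B(x_i,2\eta_i)}|h(y)-\avg_{B(x_i,2\eta_i)}h|^2 dy}^{1/2} \lesssim_d \frac{1}{\eta_i^{d/2}}\|\nabla h\|_{L^2(B(x_i,2\eta_i))},
\end{equation*}
using Poincar\'e in the last step (the mean-zero version kills the constant ambiguity in $h$). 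Finally $\|\nabla h\|_{L^2(B(x_i,2\eta_i))} \leq \|\nabla H\|_{L^2(B(x_i,2\eta_i))} + \|\nabla w\|_{L^2(B(x_i,2\eta_i))}$, and the $w$-contribution is controlled by $\|\nabla w\|_{L^\infty}|B(x_i,2\eta_i)|^{1/2} \lesssim_d (1+\|\mu\|_{L^\infty})\eta_i \cdot \eta_i^{d/2}$, which after division by $\eta_i^{d/2}$ is again absorbed into the $\|\mu\|_{L^\infty}\eta_i$ term (absorbing the harmless $1$ as well, since it contributes at the same scale). Collecting terms yields the stated bound.

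The main obstacle I anticipate is not any single estimate but getting the geometry right: verifying that $B(x_i,2\eta_i)$ is genuinely disjoint from $\supp\d_{x_j}^{(\eta_j)} = \p B(x_j,\eta_j)$ for every $j\neq i$, so that $H_{N,\ueta_N,i}^{\mu,\ux_N}$ really is ($\mu$-perturbed) harmonic on that ball rather than just on the smaller ball $B(x_i,\eta_i)$. This requires the triangle inequality $|x_i - x_j| \geq 4\eta_i \geq 2\eta_i + 2\eta_i$ together with $\eta_j \leq r_{j,\ep} \leq \tfrac14|x_i-x_j|$, giving $|x_i - x_j| - \eta_j \geq \tfrac34|x_i-x_j| \geq 3\eta_i > 2\eta_i$; so the separation holds with room to spare, but it is worth stating explicitly since the whole argument hinges on it. A secondary point is bookkeeping the various $\eta_i$-powers so that the $1$ and the $\|\mu\|_{L^\infty}$ pieces both land on the single term $\|\mu\|_{L^\infty}\eta_i$ (one may need to assume $\eta_i < 1$, which is free, and note $1 \leq 1 + \|\mu\|_{L^\infty}$).
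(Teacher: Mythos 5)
Your proof is correct in its essentials, but note that the paper itself does not prove this lemma: it simply cites Serfaty's Lemma A.2, which is stated in the whole-space setting. Your proposal is therefore a genuinely self-contained alternative, and the mechanism you use --- no smeared charge meets $B(x_i,2\eta_i)$ because $\eta_j\le r_{j,\ep}\le\tfrac14|x_i-x_j|$, hence $-\D H=-\mu+c$ there for $H\coloneqq H_{N,\ueta_N,i}^{\mu,\ux_N}$; split off a Poisson part with $L^\infty$ data of size $\lesssim 1+\|\mu\|_{L^\infty}$ and gradient $\lesssim_d(1+\|\mu\|_{L^\infty})\eta_i$; control the harmonic remainder on $B(x_i,\eta_i)$ by the interior gradient estimate plus Poincar\'e (you could even skip Poincar\'e by applying the mean-value property directly to the harmonic vector field $\nabla h$) --- is the standard elliptic argument and is essentially what underlies the cited lemma. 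What your route buys is precisely the torus adaptation the paper says elsewhere it cares about: you see explicitly where the periodic background enters, namely the constant $c=\int_{\T^d}\mu-\tfrac{N-1}{N}$ produced by $-\D\g=\d_0-1$, which the bare citation glosses over. Your geometric check ($|y-x_i|\ge\tfrac34|x_i-x_j|\ge3\eta_i>2\eta_i$ for $y\in\p B(x_j,\eta_j)$) is exactly right and is indeed the point on which the whole argument hinges.

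One step to repair: because of that constant $c$, your estimate as written yields $(1+\|\mu\|_{L^\infty})\eta_i$, and ``absorbing the harmless $1$'' into $\|\mu\|_{L^\infty}\eta_i$ is not legitimate for arbitrary $\mu\in L^\infty(\T^d)$ (think $\|\mu\|_{L^\infty}\ll1$); the inequality $1\le1+\|\mu\|_{L^\infty}$ goes the wrong way for this purpose. The discrepancy is immaterial where the lemma is used --- in \cref{prop:com} one takes $\mu=1+\vep^2\Uu$, so $\|\mu\|_{L^\infty}\gtrsim 1$, and the error terms there carry $(1+\|\mu\|_{L^\infty})$ anyway --- but if you want the statement verbatim for general $\mu$, you can recover it: integrating $-\D H=-\mu+c$ over $B(x_i,r)$ gives the flux identity $\int_{B(x_i,r)}(\mu-c)=\int_{\p B(x_i,r)}\p_\nu H$, and averaging over $r\in(\eta_i,2\eta_i)$ and applying Cauchy--Schwarz yields $|c|\eta_i\lesssim_d\|\mu\|_{L^\infty}\eta_i+\eta_i^{-d/2}\|\nabla H\|_{L^2(B(x_i,2\eta_i))}$, so the constant's contribution is already dominated by the two terms on the right-hand side of the stated bound.
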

\begin{proof}
See \cite[Lemma A.2]{Serfaty2021}.
\end{proof}

\begin{remark}
\label{rem:Hi}
Note that since the balls $B(x_{i},r_{i,\ep})$, $1\leq i\leq N$, are pairwise disjoint and $\g_\eta(x)$ is constant for $|x|\leq\eta$ and equals $\g(x)$ otherwise, we have for any choices $0<\eta_i\leq r_{i,\ep}$,
\begin{equation}
\|\nabla H_{N,\ueta_N}^{\mu,\ux_N}\|_{L^2}^2 = \int_{\T^d\setminus\bigcup_{i=1}^N B(x_i,\eta_i)}|\nabla H_{N}^{\mu,\ux_N}(x)|^2dx + \sum_{i=1}^N \int_{B(x_i,\eta_i)} |\nabla H_{N,\ueta_N,i}^{\mu,\ux_N}(x)|^2dx.
\end{equation}
Also note from the convexity of the function $z\mapsto |z|^2$ that
\begin{equation}
|\nabla H_{N,\ueta_N,i}^{\mu,\ux_N}(x)|^2 \lesssim |\nabla H_{N,\ueta_N}^{\mu,\ux_N}(x)|^2 + \frac{1}{N^2}|\nabla\g_{\eta_i}(x-x_i)|^2,
\end{equation}
and therefore,
\begin{align}
\sum_{i=1}^N \int_{B(x_i,2\eta_i)} |\nabla H_{N,\ueta_N,i}^{\mu,\ux_N}(x)|^2dx &\lesssim \|\nabla H_{N,\ueta_N}^{\mu,\ux_N}\|_{L^2}^2 + \frac{1}{N^2}\sum_{i=1}^N \int_{B(x_i,2\eta_i)}	|\nabla\g_{\eta_i}(x-x_i)|^2dx \nn\\
&\lesssim_d \|\nabla H_{N,\ueta_N}^{\mu,\ux_N}\|_{L^2}^2 + \frac{1}{N^2}\sum_{i=1}^N\eta_i^{2-d}.
\end{align}
\end{remark}

\begin{proof}[Proof of \cref{prop:com}]
For shorthand, let us introduce the kernel notation
\begin{equation}
\label{eq:Kvdef}
K_v(x,y) \coloneqq (v(x)-v(y))\cdot\nabla\g(x-y), \qquad x\neq y\in \T^d.
\end{equation}
Introducing a parameter vector $\ueta_N\in (\R_+)^N$ to be specified momentarily, we observe the identity
\begin{equation}
\int_{(\T^d)^2\setminus\D_2} K_v(x,y)d(\frac{1}{N}\sum_{i=1}^N\d_{x_i}-\mu)^{\otimes 2}(x,y) = \sum_{i=1}^3 \Te_i,
\end{equation}
where
\begin{align}
\Te_1 &\coloneqq \int_{(\T^d)^2\setminus\D_2} K_v(x,y)d(\frac{1}{N}\sum_{i=1}^N\d_{x_i}^{(\eta_i)}-\mu)^{\otimes 2}(x,y),\\
\Te_2 &\coloneqq \frac{1}{N}\sum_{j=1}^N\int_{(\T^d)^2\setminus\D_2} K_v(x,y)d(\frac{1}{N}\sum_{i=1}^N\d_{x_i}^{(\eta_i)}- \mu)(x)d( \d_{x_j}-\d_{x_j}^{(\eta_j)})(y),\\
\Te_3 &\coloneqq \frac{1}{N}\sum_{j=1}^N\int_{(\T^d)^2\setminus\D_2} K_v(x,y)d(\frac{1}{N}\sum_{i=1}^N\d_{x_i}- \mu)(x)d( \d_{x_j}-\d_{x_j}^{(\eta_j)})(y).\\
\end{align}
We now estimate each of the $\Te_i$.

\begin{description}[leftmargin=*]
\item[$\Te_1$] 
Using \cite[Lemma 4.3]{Serfaty2020}, we can rewrite $\Te_1$ as
\begin{equation}
\Te_1 = \int_{\T^d} \nabla v(x): \comm{H_{N,\ueta_N}^{\mu,\ux_N}}{H_{N,\ueta_N}^{\mu,\ux_N}}_{SE}(x)dx,
\end{equation}
where $\comm{H_{N,\ueta_N}^{\mu,\ux_N}}{H_{N,\ueta_N}^{\mu,\ux_N}}_{SE}$ is the ($d\times d$) stress-energy tensor defined by
\begin{equation}
\label{eq:SE}
\comm{H_{N,\ueta_N}^{\mu,\ux_N}}{H_{N,\ueta_N}^{\mu,\ux_N}}_{SE}^{\al\be} \coloneqq 2\p_\al H_{N,\ueta_N}^{\mu,\ux_N} \p_\be H_{N,\ueta_N}^{\mu,\ux_N} -|\nabla H_{N,\ueta_N}^{\mu,\ux_N}|^2\d_{\al\be}, \qquad \al,\be\in\{1,\ldots,d\}.
\end{equation}
So by Cauchy-Schwarz and the triangle inequality, it follows that
\begin{equation}
\label{eq:commT1fin}
|\Te_1| \lesssim_d \|\nabla v\|_{L^\infty} \|\nabla H_{N,\ueta_N}^{\mu,\ux_N}\|_{L^2}^2.
\end{equation}

\item[$\Te_2$] 
Recalling that $\d_{x_j}^{(\eta_j)}$ is a probability measure, we rewrite $\Te_2$ as
\begin{equation}
\begin{split}
&\frac{1}{N}\sum_{j=1}^N\int_{(\T^d)^2}\paren*{K_v(x,x_j)-K_v(x,y)} d(\frac{1}{N}\sum_{{1\leq i\leq N}\atop{i\neq j}}\d_{x_i}^{(\eta_i)}-\mu)(x)d\d_{x_j}^{(\eta_j)}(y) \\
&\ph - \frac{1}{N^2}\sum_{j=1}^N \int_{\T^d} \paren*{K_v(x,x_j)-K_v(x,y)}d(\d_{x_j}^{(\eta_j)})^{\otimes 2}(x,y).
\end{split}
\end{equation}
Unpacking the definition \eqref{eq:Kvdef} of $K_v$, we add and subtract to write
\begin{equation}
K_v(x,x_j) - K_v(x,y) = \paren*{\nabla\g(x-x_j) -\nabla\g(x-y)}\cdot \paren*{v(x)-v(x_j)} + \nabla\g(x-y)\cdot \paren*{v(y)-v(x_j)}.
\end{equation}
Recalling identity \eqref{eq:idgconv}, Fubini-Tonelli implies that
\begin{equation}
\begin{split}
&\int_{(\T^d)^2}\paren*{\nabla\g(x-x_j) -\nabla\g(x-y)}\cdot \paren*{v(x)-v(x_j)}d(\frac{1}{N}\sum_{{1\leq i\leq N}\atop{i\neq j}}\d_{x_i}^{(\eta_i)}-\mu)(x)d\d_{x_j}^{(\eta_j)}(y)\\
&=\int_{\T^d} \nabla \f_{\eta_j}(x-x_j)\cdot \paren*{v(x)-v(x_j)}d(\frac{1}{N}\sum_{{1\leq i\leq N}\atop{i\neq j}}\d_{x_i}^{(\eta_i)}-\mu)(x).
\end{split}
\end{equation}
Assuming $\eta_j\leq r_{j,\ep}$, $\supp(\nabla \f_{\eta_j})(\cdot-x_j)\subset \ol{B(x_j,\eta_j)}$ implies that the second line simplifies to
\begin{equation}
-\int_{\T^d} \nabla \f_{\eta_j}(x-x_j)\cdot \paren*{v(x)-v(x_j)}d\mu(x).
\end{equation}
Next, we use Fubini-Tonelli to write
\begin{equation}
\begin{split}
&\int_{(\T^d)^2}\nabla\g(x-y)\cdot \paren*{v(y)-v(x_j)}d(\frac{1}{N}\sum_{{1\leq i\leq N}\atop{i\neq j}}\d_{x_i}^{(\eta_i)}-\mu)(x)d\d_{x_j}^{(\eta_j)}(y) \\
&=-\int_{\T^d}\nabla H_{N,\ueta_N,j}^{\mu,\ux_N}(y)\cdot \paren*{v(y)-v(x_j)}d\d_{x_j}^{(\eta_j)}(y).
\end{split}
\end{equation}
After a little bookkeeping, we find that
\begin{equation}
\label{eq:T2bkp}
\begin{split}
\Te_2 &= -\frac{1}{N}\sum_{j=1}^N\Bigg(\int_{\T^d} \nabla \f_{\eta_j}(x-x_j)\cdot \paren*{v(x)-v(x_j)}d\mu(x)\\
&\ph  + \int_{\T^d}\nabla H_{N,\ueta_N,j}^{\mu,\ux_N}(y)\cdot \paren*{v(y)-v(x_j)}d\d_{x_j}^{(\eta_j)}(y) \\
&\ph\ph+ \frac{1}{N} \int_{\T^d} \paren*{K_v(x,x_j)-K_v(x,y)}d(\d_{x_j}^{(\eta_j)})^{\otimes 2}(x,y)\Bigg).
\end{split}
\end{equation}

We proceed to estimate each of the terms in the right-hand side of \eqref{eq:T2bkp}. Since $\supp(\nabla \f_{\eta_j}(\cdot-x_j))\subset \ol{B(x_j,\eta_j)}$, we can use the mean-value theorem and H\"older's inequality to obtain
\begin{align}
\left|\int_{\T^d} \nabla \f_{\eta_j}(x-x_j)\cdot \paren*{v(x)-v(x_j)}d\mu(x)\right| \leq \|\nabla \f_{\eta_j}\|_{L^1}\|\nabla v\|_{L^\infty} \|\mu\|_{L^\infty}\eta_j &\lesssim_d \|\nabla v\|_{L^\infty}\|\mu\|_{L^\infty}\eta_j^2,
\end{align}
where the ultimate line follows from \cref{lem:gLp}. Since $\supp(\d_{x_j}^{(\eta_j)})\subset \ol{B(x_j,\eta_j)}$, we can argue similarly to before, obtaining
\begin{align}
&\left|\int_{\T^d}\nabla H_{N,\ueta_N,j}^{\mu,\ux_N}(y)\cdot \paren*{v(y)-v(x_j)}d\d_{x_j}^{(\eta_j)}(y) \right|\nn\\
&\leq \|\nabla H_{N,\ueta_N,j}^{\mu,\ux_N}\|_{L^\infty(B(x_j,\eta_j))}\|\nabla v\|_{L^\infty}\eta_j \nn\\
&\lesssim_d \|\nabla v\|_{L^\infty}\eta_j\paren*{\frac{1}{\eta_j^{d/2}}\paren*{\int_{B(x_j,2\eta_j)}  |\nabla H_{N,\ueta_N,j}^{\mu,\ux_N}(x)|^2 dx}^{1/2} +\|\mu\|_{L^\infty}\eta_{j}},
\end{align}
where the ultimate line follows from \cref{lem:HLinf}. Finally, using the crude bound
\begin{equation}
|K_v(x,y)| \lesssim_d \min\{\frac{\|\nabla v\|_{L^\infty}}{|x-y|^{d-2}}, \frac{\|v\|_{L^\infty}}{|x-y|^{d-1}}\},
\end{equation}
which follows from the definition \eqref{eq:Kvdef} of $K_v(x,y)$ and the mean-value theorem, together with the scaling invariance of Lebesgue measure, we find that
\begin{equation}
\left|\int_{\T^d} \paren*{K_v(x,x_j)-K_v(x,y)}d(\d_{x_j}^{(\eta_j)})^{\otimes 2}(x,y)\right|\lesssim_d \|\nabla v\|_{L^\infty}\eta_j^{2-d}.
\end{equation}
Putting together the above estimates, we have shown that
\begin{equation}
\label{eq:commT2fin}
\begin{split}
|\Te_2| &\lesssim_d \frac{\|\nabla v\|_{L^\infty}}{N}\sum_{j=1}^N\Bigg(\|\mu\|_{L^\infty}\eta_j^2 + \frac{1}{N\eta_{j}^{d-2}}+ \eta_j^{1-\frac{d}{2}}\paren*{\int_{B(x_j,2\eta_j)}  |\nabla H_{N,\ueta_N,j}^{\mu,\ux_N}(x)|^2 dx}^{1/2}\Bigg).
\end{split}
\end{equation}
\item[$\Te_3$] 
The analysis is essentially the same as for that of $\Te_2$, therefore we omit the details. Ultimately, we find that $\Te_3$ is controlled by the right-hand side of \eqref{eq:commT2fin}.
\end{description}

Putting together the estimates \eqref{eq:commT1fin} and \eqref{eq:commT2fin}, we have shown that the left-hand side of \eqref{eq:com} is $\lesssim_d$
\begin{equation}
\begin{split}
&\|\nabla v\|_{L^\infty} \|\nabla H_{N,\ueta_N}^{\mu,\ux_N}\|_{L^2}^2 + \frac{\|\nabla v\|_{L^\infty}}{N}\sum_{j=1}^N\Bigg(\|\mu\|_{L^\infty}\eta_j^2 + \frac{1}{N\eta_{j}^{d-2}} \\
&\ph + \eta_j^{1-\frac{d}{2}}\paren*{\int_{B(x_j,2\eta_j)}  |\nabla H_{N,\ueta_N,j}^{\mu,\ux_N}(x)|^2 dx}^{1/2} \Bigg).
\end{split}
\end{equation}
By Cauchy-Schwarz in the $j$-summation,
\begin{align}
&\frac{1}{N}\sum_{j=1}^N \eta_j^{1-\frac{d}{2}}\paren*{\int_{B(x_j,2\eta_j)}  |\nabla H_{N,\ueta_N,j}^{\mu,\ux_N}(x)|^2 dx}^{1/2} \nn\\
&\leq \paren*{\frac{1}{N^2}\sum_{j=1}^N \eta_j^{2-d}}^{1/2} \paren*{\sum_{j=1}^N \int_{B(x_j,2\eta_j)}  |\nabla H_{N,\ueta_N,j}^{\mu,\ux_N}(x)|^2 dx}^{1/2} \nn\\
&\lesssim_d \paren*{\frac{1}{N^2}\sum_{j=1}^N \eta_j^{2-d}}^{1/2}\|\nabla H_{N,\ueta_N}^{\mu,\ux_N}\|_{L^2} + \paren*{\frac{1}{N^2}\sum_{j=1}^N \eta_j^{2-d}},
\end{align}
where we use \cref{rem:Hi} to obtain the ultimate line. We choose $\eta_i=r_{i,\ep}$ (recall definition \eqref{eq:ridef}) for every $1\leq i\leq N$, so that by estimate \eqref{eq:MEsi} of \cref{cor:MEct}, we have that
\begin{equation}
N^{-2}\sum_{j=1}^N  r_{j,\ep}^{2-d} \lesssim_d |\Fr_N(\ux_N,\mu)| + \frac{1}{N}(|\ln \ep|1_{d=2} + \ep^{2-d}1_{d\geq 3}) + (1+\|\mu\|_{L^\infty})\ep^2.
\end{equation}
Similarly, by estimate \eqref{eq:MEHlb} of \cref{cor:MEct}, we also have the bound
\begin{equation}
\|\nabla H_{N,\ur_{N,\ep}}^{\mu,\ux_N}\|_{L^2}^2 \lesssim_d |\Fr_N(\ux_N,\mu)| + \frac{1}{N}(|\ln \ep|1_{d=2} + \ep^{2-d}1_{d\geq 3}) + (1+\|\mu\|_{L^\infty})\ep^2.
\end{equation}
Using that $r_{i,\ep}\leq \ep$ tautologically and performing a little algebra, we arrive at the desired conclusion.
\end{proof}

\section{Proof of main results}
\label{sec:MR}
In this section, we give the proof of our main results, \cref{thm:main} and \cref{cor:main}. We have divided the proof into two subsections. In \cref{ssec:MRme}, we establish the Gronwall-type estimate for the modulated energy $\Hr_{N,\vep}(\uz_N(t),u(t))$, which proves \cref{thm:main}. In \cref{ssec:MR_W} we show how to deduce the weak-* convergence of the empirical measure from the modulated-energy estimate, which proves \cref{cor:main}.

\subsection{Modulated energy estimate}
\label{ssec:MRme}
We recall from \cref{ssec:introPf} of the introduction that the time derivative of the modulated energy satisfies the identify
\begin{equation}
\frac{d}{dt}\Hr_{N,\vep}(\uz_N(t),u(t)) = \Te_1 + \cdots + \Te_4,
\end{equation}
where the definitions of $\Te_1,\ldots,\Te_4$ are given in \eqref{eq:MET1def} - \eqref{eq:MET4def}. Through a series of four lemmas, we estimate each of the $\mathrm{Term}_j$, beginning with $\mathrm{Term}_1$. Since all of the estimates are static (i.e. they hold pointwise in time), we shall omit the time-dependence until the end of this subsection. Also, we shall omit the underlying spatial domain $\T^d$ in our function space notation.

\begin{lemma}
\label{lemT1}
For $d\geq 2$,
\begin{equation}
|\mathrm{Term}_1| \lesssim_d \frac{\|\nabla u\|_{L^\infty}}{N}\sum_{i=1}^N |u(x_i)-v_i|^2.
\end{equation}
\end{lemma}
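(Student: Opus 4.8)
The statement is essentially immediate from the definition of $\mathrm{Term}_1$ in \eqref{eq:MET1def}, so the "proof" is a one-line estimate; there is no real obstacle. Recall that
\[
\mathrm{Term}_1 = -\frac{1}{N}\sum_{i=1}^N \paren*{u(x_i)-v_i}^{\otimes 2} : \nabla u(x_i),
\]
where $:$ is the Frobenius inner product on $d\times d$ matrices. The plan is simply to bound each summand pointwise: for fixed $i$, the rank-one matrix $(u(x_i)-v_i)^{\otimes 2}$ has Frobenius norm exactly $|u(x_i)-v_i|^2$, and $\|\nabla u(x_i)\|$ (Frobenius norm of the Jacobian at the point $x_i$) is controlled by $\|\nabla u\|_{L^\infty}$ up to a dimensional constant coming from comparing the Frobenius and operator norms. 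Hence by Cauchy–Schwarz for the Frobenius pairing,
\[
\left|\paren*{u(x_i)-v_i}^{\otimes 2} : \nabla u(x_i)\right| \leq \left|u(x_i)-v_i\right|^2\, \|\nabla u\|_{L^\infty}.
\]

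Summing over $i$, dividing by $N$, and absorbing the dimensional constant into $\lesssim_d$ yields
\[
|\mathrm{Term}_1| \lesssim_d \frac{\|\nabla u\|_{L^\infty}}{N}\sum_{i=1}^N |u(x_i)-v_i|^2,
\]
as claimed. The only thing worth flagging is the bookkeeping on matrix norms — one could equivalently write out $(u(x_i)-v_i)^{\otimes 2}:\nabla u(x_i) = \sum_{\al,\be}(u^\al(x_i)-v_i^\al)(u^\be(x_i)-v_i^\be)\,\p_\be u^\al(x_i)$ and bound the triple sum directly by $\|\nabla u\|_{L^\infty}\sum_{\al,\be}|u^\al(x_i)-v_i^\al|\,|u^\be(x_i)-v_i^\be| = \|\nabla u\|_{L^\infty}\paren*{\sum_\al |u^\al(x_i)-v_i^\al|}^2 \lesssim_d \|\nabla u\|_{L^\infty}|u(x_i)-v_i|^2$ — but this is routine. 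No further estimates, no use of the modulated potential energy, and no smearing machinery are needed here; this lemma is purely the contribution of the "kinetic" (velocity-modulation) part of $\Hr_{N,\vep}$ and is the easiest of the four term estimates.
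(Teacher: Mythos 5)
Your argument is correct and matches the paper, which simply notes the bound is immediate from taking absolute values in the definition \eqref{eq:MET1def}; your pointwise Frobenius--Cauchy--Schwarz bound on each summand is exactly that routine estimate spelled out. Nothing further is needed.
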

\begin{proof}
Immediate from taking absolute values.
\end{proof}

\begin{lemma}
\label{lemT2}
For $d\geq 2$,
\begin{equation}
\begin{split}
|\mathrm{Term}_2| &\lesssim_d \frac{\|\nabla u\|_{L^\infty}}{\vep^2} \Bigg(|\Fr_N(\ux_N,1+\vep^2\Uu)|+\frac{1}{N}\paren*{|\ln\ep|1_{d=2} + \ep^{2-d}1_{d\geq 3}} \\
&\ph + (1+\vep^2\|\nabla u\|_{L^\infty}^2)\ep^2 \Bigg),
\end{split}
\end{equation}
\end{lemma}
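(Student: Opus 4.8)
The plan is to recognize that $\mathrm{Term}_2$, as defined in \eqref{eq:MET2def}, is exactly a renormalized commutator of the type controlled by \cref{prop:com}, after the trivial identification of the objects. Writing $\mathrm{Term}_2 = \frac{1}{2\vep^2}\int_{(\T^d)^2\setminus\D_2} (u(x)-u(y))\cdot\nabla\g(x-y)\,d\bigl(\frac{1}{N}\sum_{i=1}^N\d_{x_i}-\mu\bigr)^{\otimes 2}(x,y)$ with the background measure taken to be $\mu = 1+\vep^2\Uu$, we are in precisely the setting of \eqref{eq:rcomm}--\eqref{eq:com} with vector field $v=u$. The prefactor $\frac{1}{2\vep^2}$ is pulled outside, so the whole estimate will carry an overall $\vep^{-2}$, which accounts for the shape of the claimed bound.

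First I would verify the hypotheses of \cref{prop:com}: $u\in C^{1,s}\subset W^{1,\infty}$ is Lipschitz, $\ux_N\in(\T^d)^N\setminus\D_N$ by assumption (pairwise distinct positions), and we need $\mu = 1+\vep^2\Uu \in L^\infty(\T^d)$. The latter holds because $\Uu = \p_\al u^\be\,\p_\be u^\al$, which is a product of two components of $\nabla u\in L^\infty$, so $\|\Uu\|_{L^\infty}\lesssim_d \|\nabla u\|_{L^\infty}^2$ and hence $\|\mu\|_{L^\infty}\leq 1 + \vep^2\|\nabla u\|_{L^\infty}^2 \cdot C_d$. Applying \cref{prop:com} with this $\mu$ then gives, after multiplying by $\frac{1}{2\vep^2}$,
\begin{equation}
|\mathrm{Term}_2| \leq \frac{C_d\|\nabla u\|_{L^\infty}}{2\vep^2}\Bigl(|\Fr_N(\ux_N,1+\vep^2\Uu)| + \frac{|\ln\ep|}{N}1_{d=2} + \frac{1}{N\ep^{d-2}}1_{d\geq 3} + (1+\|\mu\|_{L^\infty})\ep^2\Bigr).
\end{equation}
Finally I would absorb the $\mu$-dependence: since $1+\|\mu\|_{L^\infty} \lesssim_d 1 + \vep^2\|\nabla u\|_{L^\infty}^2$, the last term is $\lesssim_d (1+\vep^2\|\nabla u\|_{L^\infty}^2)\ep^2$, which matches the stated form. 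Relabeling the free parameter and collecting constants yields the lemma for all $0<\ep<1/8$.

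There is essentially no obstacle here: the lemma is a direct specialization of the already-proven \cref{prop:com} with $v=u$ and $\mu=1+\vep^2\Uu$, plus the elementary algebra bound $\|\p_\al u^\be\,\p_\be u^\al\|_{L^\infty}\lesssim_d\|\nabla u\|_{L^\infty}^2$. The only point requiring the slightest care is keeping the $\vep^2$-weighted term in $\mu$ tracked correctly through the $(1+\|\mu\|_{L^\infty})\ep^2$ error so that it appears as $(1+\vep^2\|\nabla u\|_{L^\infty}^2)\ep^2$ rather than being crudely bounded; this is what produces the precise $\vep$-dependence needed downstream in the Gronwall argument of \cref{thm:main}.
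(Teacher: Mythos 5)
Your proposal is correct and matches the paper's proof exactly: the paper also proves \cref{lemT2} by applying \cref{prop:com} with $v=u$ and $\mu = 1+\vep^2\Uu$, using $\|\Uu\|_{L^\infty}\lesssim_d\|\nabla u\|_{L^\infty}^2$ to convert the $(1+\|\mu\|_{L^\infty})\ep^2$ error into $(1+\vep^2\|\nabla u\|_{L^\infty}^2)\ep^2$, with the $\vep^{-2}$ prefactor pulled out front.
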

for all choices $0<\ep<1/8$
\begin{proof}
Apply \cref{prop:com} with $v=u$ and $\mu = 1+\vep^2\Uu$ and use that $\|\Uu\|_{L^\infty} \lesssim_d \|\nabla u\|_{L^\infty}^2$.
\end{proof}

\begin{lemma}
\label{lemT3}
For $d\geq 2$ and $s>0$, there exists a constant $C_{d,s}>0$ such that 
\begin{equation}
\begin{split}
|\Te_3| &\leq C_{d,s}\|u\|_{B_{\infty,\infty}^{1+s}}^3\Bigg(\Fr_N(\ux_N,1+\vep^2 \Uu) + \frac{C_{d,s}}{N}\Big(|\ln\ep|1_{d=2} + \ep^{2-d}1_{d\geq 3}\Big)\\
&\ph + C_{d,s}(1+\vep^2\|\nabla u\|_{L^\infty}^2)\ep^2 \Bigg)^{1/2}.
\end{split}
\end{equation}
for all choices $0<\ep<1/8$.
\end{lemma}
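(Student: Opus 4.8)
The plan is to read $\Te_3$ as a linear statistic of the fluctuation measure $\frac{1}{N}\sum_{i=1}^N\d_{x_i}-\mu$, with $\mu\coloneqq 1+\vep^2\Uu$, tested against the function $\varphi\coloneqq -\nabla\cdot\Dm^{-2}(u\U)$, and then to invoke \cref{prop:MESob}. Since $\Te_3 = \int_{\T^d}\varphi(x)\,d(\frac{1}{N}\sum_{i=1}^N\d_{x_i}-\mu)(x)$, that proposition yields, for every $0<\ep<1/8$,
\begin{equation*}
\begin{split}
|\Te_3| &\lesssim_d \ep\|\nabla\varphi\|_{L^\infty} \\
&\quad + \|\nabla\varphi\|_{L^2}\paren*{\Fr_N(\ux_N,\mu) + \tfrac{C_d}{N}\paren*{|\ln\ep|1_{d=2}+\ep^{2-d}1_{d\geq 3}} + C_d(1+\|\mu\|_{L^\infty})\ep^2}^{1/2},
\end{split}
\end{equation*}
once we have checked $\varphi\in W^{1,\infty}$. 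It therefore suffices to establish $\|\nabla\varphi\|_{L^\infty}+\|\nabla\varphi\|_{L^2}\lesssim_{d,s}\|u\|_{B^{1+s}_{\infty,\infty}}^3$ and to note $1+\|\mu\|_{L^\infty}\lesssim_d 1+\vep^2\|\nabla u\|_{L^\infty}^2$, using $\|\Uu\|_{L^\infty}\lesssim_d\|\nabla u\|_{L^\infty}^2$.

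For the norms of $\nabla\varphi$, observe that its $\be$-component equals $-\sum_\al \p_\al\p_\be\Dm^{-2}(u^\al\U)$, i.e.\ a finite sum of the second-order Riesz transforms $\p_\al\p_\be\Dm^{-2}$ --- zeroth-order Fourier multipliers with symbol $-k_\al k_\be/|k|^2$ --- applied to the components $u^\al\U$ of the vector field $u\U$. Such multipliers are bounded on $L^2(\T^d)$ and on $B^s_{\infty,\infty}(\T^d)=C^{0,s}(\T^d)$ for $0<s<1$, by standard Littlewood--Paley estimates (see \cite{BCD2011}); hence $\|\nabla\varphi\|_{L^2}\lesssim\|u\U\|_{L^2}\leq\|u\U\|_{L^\infty}$ and $\|\nabla\varphi\|_{L^\infty}\lesssim_s\|\nabla\varphi\|_{C^{0,s}}\lesssim_{d,s}\|u\U\|_{C^{0,s}}$, and the same reasoning (now with the first-order multiplier $\p_\al\Dm^{-2}$) shows $\varphi\in C^{1,s}\subset W^{1,\infty}$. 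It remains to bound $\|u\U\|_{C^{0,s}}$: writing $\U=\p_\al u^\be\p_\be u^\al$, one application of the Besov algebra estimate \cref{prop:Bony} with $p=q=\infty$, together with $\|u\|_{L^\infty}+\|\nabla u\|_{L^\infty}\lesssim_{d,s}\|u\|_{B^{1+s}_{\infty,\infty}}$, gives $\|\U\|_{C^{0,s}}\lesssim_{d,s}\|u\|_{B^{1+s}_{\infty,\infty}}^2$, and a second application then gives $\|u\U\|_{C^{0,s}}\lesssim_{d,s}\|u\|_{B^{1+s}_{\infty,\infty}}^3$.

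Combining, $|\Te_3|\lesssim_{d,s}\|u\|_{B^{1+s}_{\infty,\infty}}^3\big(\ep+(\cdots)^{1/2}\big)$, where $(\cdots)$ abbreviates $\Fr_N(\ux_N,1+\vep^2\Uu)+\tfrac{C_d}{N}(|\ln\ep|1_{d=2}+\ep^{2-d}1_{d\geq 3})+C_d(1+\vep^2\|\nabla u\|_{L^\infty}^2)\ep^2$. The quantity $(\cdots)$ is $\geq C_d\ep^2$ since it contains that very summand, so $\ep\lesssim_d(\cdots)^{1/2}$ and the loose $\ep$ is absorbed; relabeling the constants $C_{d,s}$ gives the stated inequality. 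Note that $\Fr_N$ occurs without an absolute value inside the square root, which is legitimate because $(\cdots)\geq 0$ by \cref{rem:MEnneg}.

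The only delicate point --- and the reason the H\"older exponent $s>0$ must be used here --- is the $L^\infty$ bound on $\nabla\varphi$: second-order Riesz transforms are \emph{not} bounded on $L^\infty$, so $\|\nabla\varphi\|_{L^\infty}$ cannot be estimated directly by $\|u\U\|_{L^\infty}$, and one must route through the H\"older (Besov) regularity of $u\U$. Everything else is bookkeeping with the Besov product rule and the sign structure of $\Fr_N$.
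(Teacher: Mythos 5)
Your argument is correct and follows essentially the same route as the paper: apply \cref{prop:MESob} with the test function $\nabla\cdot\Dm^{-2}(u\Uu)$ and measure $1+\vep^2\Uu$, bound the resulting $L^\infty$ and $L^2$ norms of the zeroth-order multiplier applied to $u\Uu$ by Littlewood--Paley considerations, and close with the Besov product estimate \cref{prop:Bony} to get $\|u\Uu\|_{B^{s}_{\infty,\infty}}\lesssim_{d,s}\|u\|_{B^{1+s}_{\infty,\infty}}^3$. The only (harmless) differences are cosmetic: you route the $L^\infty$ bound through boundedness of the Riesz transforms on $C^{0,s}$ and the embedding $C^{0,s}\subset L^\infty$, where the paper uses $B^{s}_{\infty,\infty}\hookrightarrow B^{0}_{\infty,1}\hookrightarrow L^\infty$, and you make explicit the absorption of the loose $\ep\|\nabla\varphi\|_{L^\infty}$ term and the nonnegativity of the bracket, which the paper leaves implicit.
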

\begin{proof}
Applying \cref{prop:MESob} with test function $\varphi=\nabla\Dm^{-2}(u\Uu)$ and measure $\mu = 1+\vep^2\Uu$, we find that
\begin{equation}
\begin{split}
|\Te_3| &\lesssim_d \ep\|\nabla^{\otimes 2}\Dm^{-2}(u\Uu)\|_{L^\infty}  + \|\nabla^{\otimes 2}\Dm^{-2}(u\Uu)\|_{L^2}\Bigg(\Fr_N(\ux_N,1+\vep^2 \Uu) \\
&\ph\ph + \frac{C_d}{N}\Big(|\ln\ep|1_{d=2} + \ep^{2-d}1_{d\geq 3}\Big) + C_d(1+\vep^2\|\Uu\|_{L^\infty})\ep^2 \Bigg)^{1/2}
\end{split}
\end{equation}
From the definition of the $B_{\infty,1}^0$ norm and the fact that $\nabla^{\otimes 2}\Dm^{-2}(u\U)$ has Fourier support away from the origin, it follows  that
\begin{equation}
\|\nabla^{\otimes 2}\Dm^{-2}(u\Uu)\|_{L^\infty} \lesssim_d \|u\Uu\|_{B_{\infty,1}^0} \lesssim_{s,d} \|u\Uu\|_{B_{\infty,\infty}^s}
\end{equation}
for any $s>0$. Using \cref{prop:Bony}, it follows that
\begin{equation}
\|u\Uu\|_{B_{\infty,\infty}^s} \lesssim_{s,d}\|u\|_{B_{\infty,\infty}^{1+s}}^3.
\end{equation}
Since $\|\cdot\|_{L^2}\leq\|\cdot\|_{L^\infty}$ by H\"older's inequality, we see that the proof is complete.
\end{proof}

\begin{lemma}
\label{lemT4}
For $d\geq 2$ and $s>0$, there exists a constant $C_{d,s}>0$ such that
\begin{equation}
\begin{split}
|\Te_4| &\leq C_{d,s}\|u\|_{B_{\infty,\infty}^{1+s}}^3\Bigg(\Fr_N(\ux_N,1+\vep^2 \Uu) + \frac{C_{d,s}}{N}\Big(|\ln\ep|1_{d=2} + \ep^{2-d}1_{d\geq 3}\Big)\\
&\ph + C_{d,s}(1+\vep^2 \|\nabla u\|_{L^\infty}^2)\ep^2 \Bigg)^{1/2}.
\end{split}
\end{equation}
for all choices $0<\ep<1/8$.
\end{lemma}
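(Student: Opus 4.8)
The plan is to treat $\Te_4$ in exact parallel with \cref{lemT3}: apply \cref{prop:MESob} with test function $\varphi=\p_t p$ and measure $\mu=1+\vep^2\Uu$. Since $\|\cdot\|_{L^2}\leq\|\cdot\|_{L^\infty}$ on $\T^d$ and $\|\Uu\|_{L^\infty}\lesssim_d\|\nabla u\|_{L^\infty}^2$, this reduces the lemma to establishing the pointwise-in-time bound
\begin{equation}
\|\nabla\p_t p\|_{L^\infty}\lesssim_{d,s}\|u\|_{B_{\infty,\infty}^{1+s}}^3 ,
\end{equation}
after which one concludes exactly as in the final lines of the proof of \cref{lemT3}, using $B_{\infty,\infty}^{1+s}\hookrightarrow C^{1,s}$ from \cref{lem:besemb} to match the stated right-hand side.

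The substance of the argument is to produce a usable formula for $\p_t p$. Starting from $\Uu=-\D p=\p_\al u^\be\,\p_\be u^\al$ and differentiating in time, symmetry of the two factors gives $\p_t\Uu=2\p_\al(\p_t u^\be)\,\p_\be u^\al$. At this point a naive substitution of the Euler equation $\p_t u=-(u\cdot\nabla)u-\nabla p$ generates a term of the form $u^\ga\,\p_\al\p_\ga u^\be\,\p_\be u^\al$ carrying two derivatives on a single factor of $u$, which is \emph{not} controlled by $C^{1,s}$ regularity; this is the one genuine obstacle, and it is resolved by incompressibility. Indeed, $\p_\al\p_\be u^\al=\p_\be(\nabla\cdot u)=0$, so $\p_\al(\p_t u^\be)\,\p_\be u^\al=\p_\al\!\big(\p_t u^\be\,\p_\be u^\al\big)$ is an exact divergence, and hence, feeding in the Euler equation for $\p_t u$,
\begin{equation}
\p_t p=\Dm^{-2}\p_t\Uu=-2\,\p_\al\Dm^{-2}G^\al,\qquad G^\al:=\big(u^\ga\p_\ga u^\be+\p_\be p\big)\,\p_\be u^\al ,
\end{equation}
so that $\nabla\p_t p=-2\,\nabla\p_\al\Dm^{-2}\vec G$. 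Every factor in $\vec G$ now carries at most one derivative of $u$, on recalling that $\nabla p=\nabla\Dm^{-2}\Uu$ is one degree smoother than $\Uu$ and therefore shares the regularity of $u$.

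It remains to bound $\|\vec G\|_{B_{\infty,\infty}^s}$: the Fourier multiplier $\nabla\p_\al\Dm^{-2}$ is of order zero with symbol supported away from the origin, hence bounded on $B_{\infty,\infty}^s$, and $B_{\infty,\infty}^s\hookrightarrow B_{\infty,1}^0\hookrightarrow L^\infty$ for $s>0$, which together give $\|\nabla\p_t p\|_{L^\infty}\lesssim_{d,s}\|\vec G\|_{B_{\infty,\infty}^s}$. The latter bound is a routine application of the Besov algebra estimate \cref{prop:Bony} along the chain $\|\Uu\|_{B_{\infty,\infty}^s}\lesssim_{d,s}\|\nabla u\|_{L^\infty}\|\nabla u\|_{B_{\infty,\infty}^s}\lesssim\|u\|_{B_{\infty,\infty}^{1+s}}^2$, then $\|\nabla p\|_{B_{\infty,\infty}^{1+s}}\lesssim_{d}\|\Uu\|_{B_{\infty,\infty}^s}\lesssim\|u\|_{B_{\infty,\infty}^{1+s}}^2$ and $\|u^\ga\p_\ga u^\be\|_{B_{\infty,\infty}^s}\lesssim_{d,s}\|u\|_{B_{\infty,\infty}^{1+s}}^2$ (with the same bounds for the corresponding $L^\infty$ norms), followed by one more application of \cref{prop:Bony} to multiply by $\p_\be u^\al$, yielding $\|\vec G\|_{B_{\infty,\infty}^s}\lesssim_{d,s}\|u\|_{B_{\infty,\infty}^{1+s}}^3$. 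Combining these gives the desired estimate. The only technical point to check is that time-differentiation of $\Uu=-\D p$ and the rearrangements above are legitimate for the solution class at hand; this is standard, since $u\in L^\infty([0,T];C^{1,s})$ solving \eqref{eq:Eul} has $\p_t u\in L^\infty([0,T];C^s)$ through the equation itself.
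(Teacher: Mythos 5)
Your proposal is correct and follows essentially the same route as the paper: apply \cref{prop:MESob} with $\varphi=\p_t p$ and $\mu=1+\vep^2\Uu$, derive a formula for $\p_t p$ from the Euler equation together with incompressibility (your expression $\p_t p=-2\p_\al\Dm^{-2}\big((u^\ga\p_\ga u^\be+\p_\be p)\p_\be u^\al\big)$ coincides, up to sign convention and regrouping via $\p_\al\p_\be u^\al=0$, with the formula used in the paper), and then bound $\|\nabla\p_t p\|_{L^\infty}\lesssim_{d,s}\|u\|_{B_{\infty,\infty}^{1+s}}^3$ by the same multiplier and Besov-algebra estimates as in \cref{lemT3}. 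The only difference is that you make explicit the divergence-structure cancellation that the paper leaves implicit in its ``direct computation,'' which is a useful clarification but not a different argument.
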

\begin{proof}
The proof is similar to that of \cref{lemT3}. Applying \cref{prop:MESob} with test function $\varphi=\p_t p$ and measure $\mu = 1+\vep^2\Uu$, we find that
\begin{equation}
\begin{split}
|\Te_4| &\lesssim_d \ep\|\nabla \p_t p\|_{L^\infty} + \|\nabla\p_tp\|_{L^2}\Bigg(\Fr_N(\ux_N,1+\vep^2 \Uu) + \frac{C_d}{N}\Big(|\ln\ep|1_{d=2} + \ep^{2-d}1_{d\geq 3}\Big) \\
&\ph + C_d(1+\vep^2\|\Uu\|_{L^\infty})\ep^2 \Bigg)^{1/2}.
\end{split}
\end{equation}
By direct computation, one finds that the pressure $p$ satisfies the equation
\begin{equation}
\p_t p = 2\p_\al\Dm^{-2}(u^\gamma\p_\gamma u^\beta\p_\beta u^\al) + 2\Dm^{-2}\paren*{\p_\al\p_\be\Dm^{-2}\Uu\p_\be u^\al},
\end{equation}
where we use the convention of Einstein summation. We now proceed similarly as to in the proof of the previous lemma to find that
\begin{align}
\|\p_t\nabla p\|_{L^\infty} \lesssim_{s,d} \|u\|_{B_{\infty,\infty}^{1+s}}^3.
\end{align}
Since $\|\cdot\|_{L^2}\leq\|\cdot\|_{L^\infty}$ by H\"older, the proof is complete.
\end{proof}

Using the triangle inequality and applying \Cref{lemT1,lemT2,lemT3,lemT4}, we find that there are constants $C_d, C_{d,s}>0$ such that
\begin{equation}
\begin{split}
&|\Hr_{N,\vep}(\uz_N(t),u(t))| \\
&\leq |\Hr_{N,\vep}(\uz_N(0),u(0))| +  \int_0^t \frac{C_d\|\nabla u(\tau)\|_{L^\infty}}{N}\sum_{i=1}^N |u(\tau,x_i(\tau))-v_i(\tau)|^2 d\tau\\
&\ph + \int_0^t \frac{C_d\|\nabla u(\tau)\|_{L^\infty}}{\vep^2} \Bigg(|\Fr_N(\ux_N(\tau),1+\vep^2\Uu(\tau))|+\frac{1}{N}\paren*{|\ln\ep|1_{d=2} + \ep^{2-d}1_{d\geq 3}} \\
&\ph\ph + (1+\vep^2\|\nabla u(\tau)\|_{L^\infty}^2)\ep^2 \Bigg)d\tau \\
&\ph +  \int_0^t C_{d,s}\|u(\tau)\|_{B_{\infty,\infty}^{1+s}}^3\Bigg(\Fr_N(\ux_N(\tau),1+\vep^2 \Uu(\tau)) + \frac{C_{d,s}}{N}\Big(|\ln\ep|1_{d=2} + \ep^{2-d}1_{d\geq 3}\Big)\\
&\ph + C_{d,s}(1+\vep^2 \|\nabla u(\tau)\|_{L^\infty}^2)\ep^2 \Bigg)^{1/2}d\tau.
\end{split}
\end{equation}
Remembering the definition \eqref{eq:MEdef} of $\Hr_{N,\vep}(\uz_N,u)$ and simplifying a little, the preceding inequality implies
\begin{equation}
\label{eq:rhsub}
\begin{split}
&|\Hr_{N,\vep}(\uz_N(t),u(t))| \\
&\leq |\Hr_{N,\vep}(\uz_N(0),u(0))| + C_{d,s}\int_0^t (1+\|\nabla u(\tau)\|_{L^\infty}) |\Hr_{N,\vep}(\uz_N(\tau),u(\tau))| d\tau + C_{d,s}\vep^2\int_0^t \|u(\tau)\|_{B_{\infty,\infty}^{1+s}}^6 d\tau\\
&\ph + \frac{C_{d,s}}{\vep^2}\int_0^t (1+\|\nabla u(\tau)\|_{L^\infty})\Bigg(\frac{1}{N}(|\ln\ep|1_{d=2} + \ep^{2-d}1_{d\geq 3}) + (1+\vep^2\|\nabla u(\tau)\|_{L^\infty}^2)\ep^2 \Bigg)d\tau,
\end{split}
\end{equation}
where $C_{d,s}>0$ is a possibly larger constant.

To complete the proof of \cref{thm:main}, we now balance the terms in the ultimate line by choosing $\ep=N^{-1/d}/8$. This choice satisfies the constraint $0<\ep<1/8$ and is such that all terms in the ultimate line are of the same order in $N$ (up to a logarithmic factor if $d=2$). Substituting this choice of $\ep$ into the right-hand side of inequality \eqref{eq:rhsub} and applying the Gronwall-Bellman lemma, we see that the proof of the theorem is complete.

\subsection{Convergence of the empirical measure}
\label{ssec:MR_W}
Finally, we show that the quantitative estimate \eqref{eq:main} for the modulated energy $\Hr_{N,\vep}(\uz_N,u)$ implies weak-* convergence of the empirical measure to the limiting measure \eqref{eq:limEM}. This then proves \cref{cor:main}.

It suffices to show that if $\varphi\in C(\T^d\times\R^d)$ is such that for every $\kappa>0$, there exists a compact set $K_\kappa\subset \R^d$ so that
\begin{equation}
\sup_{x \in \T^d, v\in K_\kappa^c} |\varphi(x,v)| \leq \kappa,
\end{equation}
then
\begin{equation}
\label{eq:vphirep}
\sup_{0\leq t\leq T} \left|\frac{1}{N}\sum_{i=1}^N \varphi(x_i(t),v_i(t)) - \int_{\T^d}\varphi(x,u(t,x))dx \right| \xrightarrow[N\rightarrow\infty]{} 0.
\end{equation}
Let $\chi$ be a compactly supported mollifier on $\R^d$, and let 
\begin{equation}
\varphi_\eta(x,v) \coloneqq \eta^{-2d}\int_{\T^d\times\R^d}\varphi(x-x',v-v')\chi(\frac{x'}{\eta})\chi(\frac{v'}{\eta})dxdv.
\end{equation}
Since for any compact set $K'\subset \R^d$, $\T^d\times K'$ is compact, we have that
\begin{equation}
\sup_{(x,v)\in \T^d\times K'} |\varphi(x,v)-\varphi_\eta(x,v)| \xrightarrow[\eta\rightarrow 0^+]{} 0.
\end{equation}
It follows now from the triangle inequality that we can replace $\varphi$ in \eqref{eq:vphirep} with $\varphi_\eta$ at the expense of picking up a term which can be made arbitrarily small by taking $\eta$ arbitrarily small. So without loss of generality, we may assume that $\varphi\in C^\infty(\T^d\times\R^d)$.

With this assumption, we use the triangle inequality to control the left-hand side of \eqref{eq:vphirep} by
\begin{equation}
\begin{split}
&\sup_{0\leq t\leq T}\Bigg( \frac{1}{N}\sum_{i=1}^N |\varphi(x_i(t),v_i(t)) - \varphi(x_i(t),u(t,x_i(t)))| \\
&\ph+\left|\int_{\T^d}\varphi(x,u(t,x))d(\frac{1}{N}\sum_{i=1}^N \d_{x_i(t)}-1-\vep^2\Uu(t))(x)\right|  + \vep^2\|\varphi\|_{L_{x,v}^\infty}\|\nabla u(t)\|_{L^\infty}^2\Bigg).
\end{split}
\end{equation}
By the mean-value theorem in the velocity variable,
\begin{align}
\frac{1}{N}\sum_{i=1}^N |\varphi(x_i(t),v_i(t)) - \varphi(x_i(t),u(t,x_i(t)))| &\leq \frac{\|\nabla_v\varphi\|_{L^\infty}}{N}\sum_{i=1}^N |v_i(t)-u(t,x_i(t))| \nn\\
&\leq \frac{\|\nabla_v\varphi\|_{L^\infty}}{N^{1/2}}\paren*{\sum_{i=1}^N |v_i(t)-u(t,x_i(t))|^2}^{1/2}.
\end{align}
By \cref{prop:MESob} applied with $\mu = 1+\vep^2\Uu$ and test function $\tl{\varphi}(x) \coloneqq \varphi(x,u(x))$,
\begin{equation}
\begin{split}
&\left|\int_{\T^d}\varphi(x,u(t,x))d(\frac{1}{N}\sum_{i=1}^N \d_{x_i(t)}-1-\vep^2\Uu(t))(x)\right| \\
&\lesssim \|\nabla\tl{\varphi}\|_{L^\infty}\Bigg(\Fr_N(\ux_N,1+\vep^2 \Uu) + \frac{C_d}{N}\Big(|\ln\ep|1_{d=2} + \ep^{2-d}1_{d\geq 3}\Big)  + C_d(1+\vep^2 \|\nabla u\|_{L^\infty}^2)\ep^2 \Bigg)^{1/2}
\end{split}
\end{equation}
for some constant $C_d>0$ and all choices $0<\ep<1/8$. We optimize by choosing $\ep= N^{-1/d}$. Note that by the chain rule,
\begin{equation}
\|\nabla\tl{\varphi}\|_{L^\infty} \leq \|\nabla_x\varphi\|_{L_{x,v}^\infty} + \|\nabla_v\varphi\|_{L_{x,v}^\infty}\|\nabla u\|_{L^\infty}.
\end{equation}
Letting $N\rightarrow\infty$ and appealing to \cref{thm:main}, a little bookkeeping reveals that all terms above vanish uniformly on the interval $[0,T]$. Therefore, the proof is complete.

\bibliographystyle{siam}
\bibliography{NewEul}

\end{document}